\providecommand{\noopsort[1]{}}
\numberwithin{equation}{section}
\newtheorem{thm}{Theorem}[section]
\newtheorem{cor}[thm]{Corollary}
\newtheorem{prop}[thm]{Proposition}
\newtheorem{lem}[thm]{Lemma}
\theoremstyle{definition}
\newtheorem{rem}[thm]{Remark}
\newtheorem{defn}[thm]{Definition}
\newtheorem{hyp}[thm]{Hypothesis}
\newcommand{\coloneqq}{\mathrel{\mathop:}=}
\newcommand{\one}{\mathds{1}}
\newcommand{\R}{\mathds{R}}
\newcommand{\K}{\mathds{K}}
\newcommand{\N}{\mathds{N}}
\newcommand{\expect}{\mathds{E}}
\newcommand{\Pb}{\mathds{P}}
\newcommand{\Rd}{{\mathds{R}^d}}
\newcommand{\dx}{\,\mathrm{d}}
\newcommand{\dy}{\,{\mathrm d}y}
\newcommand{\dv}{\,{\mathrm d}v}
\newcommand{\dz}{\,{\mathrm d}z}
\newcommand{\dt}{\,{\mathrm d}t}
\newcommand{\dw}{\,{\mathrm d}w}
\newcommand{\ds}{\,{\mathrm d}s}
\newcommand{\ve}{\varepsilon}
\newcommand{\pp}{\mathbf{P}}
\newcommand{\ee}{\mathbf{E}}
\let\k\undefined
\newcommand{\k}{\mathbb{k}}
\newcommand{\x}{\mathbb{x}}
\newcommand{\y}{\mathbb{y}}
\newcommand{\z}{\mathbb{z}}
\newcommand{\X}{\mathbb{X}}
\newcommand{\Z}{\mathbb{Z}}
\newcommand{\D}{\mathbb{D}}
\let\u\undefined
\newcommand{\u}{\mathbb{u}}
\let\v\undefined
\newcommand{\v}{\mathbb{v}}
\let\K\undefined
\newcommand{\K}{\mathbb{K}}
\newcommand{\f}{\mathbb{f}}
\newcommand{\g}{\mathbb{g}}
\newcommand{\cF}{\mathscr{F}}
\newcommand{\cG}{\mathscr{G}}
\begin{document}
\title{Stable processes with reflections}
\author[K. Bogdan]{Krzysztof Bogdan}
\address{Faculty of Pure and Applied Mathematics,
Wroc\l aw University of Science and Technology,
Wyb. Wyspia\'nskiego 27, 50-370 Wroc\l aw, Poland}
\email{krzysztof.bogdan@pwr.edu.pl}
\author[M. Kunze]{Markus Kunze}
\address{Universit\"at Konstanz, Fachbereich Mathematik und Statistik, Fach 193, 78357 Konstanz, Germany}
\email{markus.kunze@uni-konstanz.de}
\date{\today}
\subjclass[2020]{Primary 60J76, 60J25; Secondary 47D06, 60J35.}
\keywords{stable process, reflection, fractional Laplacian,  stationary distribution}
\thanks{K.B. was supported in part by National Science Centre (Poland), grant Opus 2023/51/B/ST1/02209.}

\begin{abstract}
We construct a Hunt process that can be described as an iso\-trop\-ic $\alpha$-stable Lévy process reflected from the complement of a bounded open Lipschitz set. In fact, we  introduce a new analytic method for concatenating Markov processes. It is based on nonlocal Schrödinger perturbations of sub-Markovian transition kernels and the construction of two supermedian functions with different growth rates at infinity. We apply this framework to describe the return distribution and the stationary distribution of the process. To handle  the strong Markov property at the reflection time, we introduce a novel ladder process, whose transition semigroup encodes 
not only 
the position of the process, but also 
the number of reflections.
\end{abstract}

\maketitle

\section{Introduction}\label{s.intro}

Let $Y\coloneqq(Y_t, t \ge 0)$ be the isotropic $\alpha$-stable Lévy process in $\R^d$, with $\alpha \in (0, 2)$ and $d \in \N$. 
Let $D \subset \R^d$ be a nonempty, open, bounded Lipschitz set. Consider
\begin{equation}
\label{e.dfet}
\tau_D \coloneqq \inf\{t > 0 : Y_t \notin D\},
\end{equation}
the first-exit time of $Y$ from $D$. Naturally, $Y_{\tau_D} \in D^c$.
We will construct a strong Markov process $X\coloneqq(X_t, t \ge 0)$ that coincides with $Y$ \textit{before} $\tau_D$, but at $\tau_D$, 
instead of 
leaving to the point $z=Y_{\tau_D}\in D^c$, it is \textit{restarted} at a point $y \in D$, distributed according to a given probability measure $\mu(z, \mathrm{d}y)$
satisfying  the following assumptions.
\begin{hyp}\label{hyp1}
\hspace{1em}
\begin{enumerate}[\upshape (i)]
\item The map \( D^c \ni z \mapsto \mu(z, A) \) is Borel measurable for every \( A \in \mathscr{B}(D) \) and \(\mu(z, \cdot)\) is a probability measure on \(\mathscr{B}(D)\) for every \( z \in D^c \).
\item There exists a compact set \( H \subset D \) such that \(\vartheta \coloneqq \inf_{z \in D^c} \mu(z, H) > 0\).
\end{enumerate}
\end{hyp}
Thus, \(\mu\) is a Markov kernel from \(D^c\) to \(D\) with respect to the Borel \(\sigma\)-algebras 
$\mathscr{B}(D)$ and $\mathscr{B}(D^c)$ on $D$ and its complement $D^c$, respectively, see  
\cite[Appendix A3]{MR958914}, which exhibits \textit{some} concentration in \(D\).

In \cite{Bogdan2023} we introduced  a  Markovian semigroup \(K\coloneqq(K(t), t > 0)\) on \(D\), corresponding to the description above.
In the present paper, we complement the analytic results of \cite{Bogdan2023} by constructing a strong Markov process, actually a Hunt process, whose transition semigroup is $K$; see Theorem \ref{t.components}.
We also give applications to the stationary measure of $K$.

\subsection*{Challenges}
We point out that the semigroup \(K\) is \emph{not} a Feller semigroup \cite[Remark 5.2]{Bogdan2023}. Moreover, the semigroup is generally neither symmetric nor even bounded on \(L^2(D)\) \cite[Remark 3.8]{Bogdan2023}. Therefore, standard analytic methods for constructing an associated strong Markov process—such as those in \cite[Theorem 17.15]{MR4226142}, \cite[Chapter 7]{MR2778606}, and \cite[Theorem 9.26]{MR3308364}—do not apply. It should also be noted
that 
restricting attention to $D$ as the state space for our process makes it difficult, if not impossible, to verify that the process indeed fits 
the description above. We overcome this problem by enlarging the 
state space to the \emph{ladder space} $\D = \N_0\times D$. In 
this way, the process $X$ is accompanied by a counting process
$N = (N_t, t\geq 0)$ that tracks the number of the reflections performed.

\subsection*{Comparison to the Brownian motion case}
Let us briefly discuss the situation where the isotropic $\alpha$-stable process is replaced by Brownian motion. Thus, Brownian motion
is reset to $D$ each time it approaches $\partial D$.
Feller \cite{feller-diffusion} was the first to study such a reflected Brownian motion in dimension one. He coined the term \textit{immediate return process} and built his work on an earlier study of associated semigroups \cite{MR47886}. The study was later extended to multiple dimensions using different techniques by various authors, including Galakhov and Skubachevskii \cite{galaskub}, Ben-Ari and Pinsky \cite{b-ap07, b-ap09}, Arendt, Kunkel, and Kunze \cite{akk16}, and Kunze \cite{kunze20}. We also mention 
Bobrowski \cite{MR4628431} who studied concatenation of Markov processes on metric graphs via \textit{exit laws}.

We point out that in the Brownian motion case, the resulting reflected process is not a Hunt process as it cannot be realized with c\`adl\`ag paths. Indeed, if we choose $D$ as the state space of the process, then at the time $\tau_D$ of reflection, the paths have left-hand limits in $\partial D$, hence outside of $D$. On the other hand, choosing $\overline{D}$ as the state space would result in \textit{instantaneous states} (or \textit{branching points}), leading to a loss of right-continuity of paths; see \cite[Section 8.2]{MR2152573}. 
Moreover, the process is not \textit{quasi-left continuous} (see, e.g., \cite[IV.7]{MR850715}), which can be seen by considering stopping times $\tau_{D_n}$ with precompact open sets $D_n$ increasing to $D$. Thus, the corresponding process is not a Hunt process, although it is possible to construct an associated \textit{right process}; see, e.g., Ben-Ari and Pinsky \cite{b-ap09}.

\subsection*{Probabilistic approach}

The reflections discussed in this paper can be viewed as a special case of  \emph{concatenation} of Markov processes, also known by various names such as \textit{piecing-out}, \textit{resurrection}, and \textit{resetting}. A general probabilistic approach to concatenation was developed by Ikeda, Nagasawa, and Watanabe \cite{MR202197, MR232439, MR238401, MR246376, MR270456}. Later, Meyer \cite{MR0415784} provided a self-contained treatment in the setting of \textit{right processes}; see also Sharpe's account in \cite[Chapter 2]{MR958914}, and a more recent treatment by Werner \cite{MR4247975}. In this approach, the concatenation
is constructed on a large probability space with an ample supply of independent identically distributed processes. Note that 
this procedure at first only yields a right process, not a Hunt process (see \cite[Theorem 2.2]{MR238401}). The Brownian motion
case discussed above provides an example where a Hunt process cannot be expected.
However, if additional assumptions are satisfied, it can be shown that the process is, in fact, a Hunt process (see \cite[Theorem 2.5]{MR238401}). This could be a possible approach in the situation discussed in the present paper, but we choose a different path and rely instead on semigroup theory.
Part of our motivation stems from the fact that the aforementioned probabilistic constructions are notorious for their heavy reliance on measurability considerations. Moreover, while a process constructed using this method clearly satisfies the description above, the probabilistic approach requires additional effort to obtain distributional information about the process (see \cite{MR246376}).

\subsection*{Our approach}
In this article, we prove  the existence of a Hunt process with transition semigroup $K$ by using 
analytical methods.
Our approach is based on (possibly nonlocal) Schr\"odinger perturbations of sub-Markovian transition kernels, 
as described by Bogdan and Sydor \cite{MR3295773}, and on the classical potential-theoretic connections between sub-Markovian semigroups 
and Hunt processes given by Bliedtner and Hansen \cite{MR850715}; see also Bogdan and Hansen 
\cite[Corollary 9.5]{hansen2023positive}.
The latter relies on the existence of two $\lambda$-supermedian functions for $K$ with \textit{different} asymptotics at $\partial D$, as established in Section~\ref{s.ef}. This is a novel \emph{analytic} approach to concatenation of Markov processes. In this approach, the transition semigroup of the process is not viewed as the result of the construction, as it is in the probabilistic approach. Instead, the semigroup is the starting point for the construction of the process.

The structure of this article is as follows. 
In Section \ref{sect.semigroup}, we lift the semigroup $K$ on $D$ to a semigroup $\K$ on the ladder space $\D$. In Section \ref{s.ef}, we find specific $\lambda$-supermedian functions for the semigroups $K$ and $\K$. In Section \ref{s.hp}, we construct the Hunt process $X$ and verify that it fits the description above. 
In Section~\ref{s.ir}, we provide an explicit formula for the stationary distribution of \(K(t)\) if \(\mu(z, \mathrm{d}y) = \mathfrak{m}(\mathrm{d}y)\), i.e., it is independent of \(z \in D^c\). In Section~\ref{s.id}, the result is extended, though less explicitly, to the case of  general \(\mu\).

\begin{rem}\label{r.nc}
Compared to \cite{Bogdan2023}, in this work we do not require the map \(D^c \ni z \mapsto \mu(z, \cdot)\) to be weakly continuous at \(\partial D\). This continuity condition  was used in \cite{Bogdan2023} to discuss the infinitesimal generator and boundary conditions in certain function spaces, but it was not necessary for constructing the semigroup and does not affect the results of the present work. For more information, see \cite[Remark 3.6]{Bogdan2023}.
\end{rem}

\subsection*{Acknowledgments} We thank Wolfhard Hansen for many discussions on the topic of the paper. We also thank Zhen-Qing Chen for comments on the case of diffusions with instantaneous return and Oleksii Kulyk and Tomasz Komorowski for references.

\section{Lifting the semigroup to the ladder space}\label{sect.semigroup}

All the sets, functions, measures and integral kernels considered below are tacitly assumed Borel.
We denote $\gamma f\coloneqq \int f\dx \gamma$ if the integral is well defined, for instance if $f\ge 0$ or if $f$ is bounded and $\gamma$ is a probability measure. Furthermore, for an integral kernel $V(x,\dx y)$, a\ measure $\gamma$, and an integrable function $f$, we let $\gamma V(A)\coloneq \int V(x,A)\gamma(\!\dx x)$ and $Vf(x)\coloneq \int f(y)V(x,\!\dx y)$. We will occasionally identify density functions with measures and kernel densities with integral kernels, for instance for the Green function $g_D$ defined below.

We next recall the construction of the  transition density $k(t, x,y)$ from  
\cite[Eq.\ (3.3)]{Bogdan2023},
using the notation of \cite{MR3295773}.
To this end, we recall some notation, notions, and identities related to the isotropic $\alpha$-stable
process $Y$.
The L\'evy kernel of the isotropic stable process $Y$ is given by
\[
\nu(x,\dy)=c_{d,\alpha}|x-y|^{-d-\alpha}\dy,\quad \mbox{where}\quad
c_{d,\alpha} \coloneqq \frac{2^\alpha \Gamma((d+\alpha)/2)}{\pi^{d/2}|\Gamma(-\alpha/2)|}.
\]
We denote the \emph{Dirichlet heat kernel} of $D$ by
\[
p_D(t,x,y),\quad t>0,\ x,y\in D,
\] 
see, e.g., 
Bogdan and Kunze \cite[Subsection 2.2]{Bogdan2023}. 
It is well known that $p_D(t,x,y)$ is jointly continuous and positive for all
$(t,x,y)\in (0,\infty)\times D\times D$.

The Dirichlet heat kernel is the transition density of 
\[
Y^D_t\coloneqq
\begin{cases}
  Y_t & \text{if } t <\tau_D, \\
      \partial & \text{if } t\ge \tau_D,
\end{cases}
\]
the \textit{stable process killed} upon leaving $D$. Here, $\partial$ is an isolated \textit{cemetery state}.
Namely, let $x\in D$, $M\in \N$,
$t_0\coloneqq 0< t_1 < t_2 < \cdots <t_M<\infty$, and  $A_1, \ldots, A_M\subset D$. Denote
$s_k \coloneqq t_k -t_{k-1}$ for $0<k\le M$.
Then,
\begin{align}
\nonumber & \pp^x(Y_{t_1}\in A_1, \ldots, Y_{t_M}\in A_M, t_M<\tau_D)
\\
 \label{e.dhmY}= & \int_{A_1} p_D(s_1, x, \dx y_1)\int_{A_2}p_D(s_2, y_1, \dx y_2) \cdots \int_{A_M} p_D(s_M, y_{M-1}, \dx y_M),
\end{align}
where $\pp^x$ is the distribution of the process starting from $x$: $\pp^x(Y_0=x)=1$.

For bounded or nonnegative functions $f$ on $D$, we let
\begin{equation}\label{e.dPD}
P_D(t)f(x):=\int_D p_D(t,x,y)f(y)\dx y,\qquad t>0,\quad x\in D. 
\end{equation}
This is called the transition semigroup of the killed process and we have
\begin{equation}\label{e.dPD2}
P_D(t)f(x)=\ee^x[f(Y_t);t<\tau_D]=\ee^x[f(Y^D_t)],
\end{equation}
provided we extend $f$ to $\partial$ by letting $f(\partial)\coloneqq 0$. Here, $\ee^x$ is the integration with respect to $\pp^x$.
With the help of $\nu$ and $p_D$, 
the joint distribution of $(\tau_D, Y_{\tau_D-}, Y_{\tau_D})$
is given
by the following \emph{Ikeda--Watanabe formula}
\begin{align}\label{eq:IW}
\pp^x[\tau_D\in I,\, Y_{\tau_D-}\in A,\, Y_{\tau_D}\in B]=
\int\limits_I \!\dx s \int\limits_{A} \!\dx v \int\limits_B\! \dx z \, p_D(s,x,v)\nu(v,z),
\end{align}
where $I\subset [0,\infty)$, $A\subset D$, $B\subset D^c$, see, e.g., Bogdan, Rosiński, Serafin, and Wojciechowski \cite[Subsection~4.2]{MR3737628}.
The \emph{harmonic kernel} is defined by
\begin{equation}
\label{eq.harmonic}
h_D(x,A)\coloneqq\pp^x(Y_{\tau_D}\in A), \quad x\in D, A\subset D^c.
\end{equation}
From \eqref{eq:IW}, it follows
that the kernel $h_D$ has a density function
\begin{align}\label{eq:Pk}
h_D(x,z)&=
\int_0^\infty \dx s \int_{D}\dx v\, p_D(s,x,v)\nu(v,z)\\
&=\int_{D}\dx v\, g_D(x,v)\nu(v,z), 
\quad x\in D,\ z\in D^c,\nonumber
\end{align}
where $g_D$ is the (density of the) \emph{Green kernel}, given by
\begin{equation}
    \label{eq.green}
    g_D(x,y) \coloneqq \int_0^\infty p_D(s,x,y)\dx s.
\end{equation}
As before, for integrable functions $f$ on $D$ and $\phi$ on $D^c$, we
let
\[
h_D \phi(x):=\int_{D^c} \phi(z)h_D(x,\dx z), \quad x\in D,
\]
\[
g_D f(x):=\int_{D} g_D(x,y) f(y)\dx y, \quad x\in D,
\]
and
\[
\mu f(z):=\int_D f(x)\mu(z,\dx x), \quad z\in D^c.
\]
Furthermore, motivated by the composition of integral kernels \cite{MR850715}, we define
\[
\nu\one_{D^c}\mu(v,B):=\int_{D^c} \nu(x,z)\mu(z,B)\dz,\quad v\in D,\; B\subset D,
\] 
an integral kernel on $D$.
Then, for $t>0$ and $x, y \in D$, we let
\[
k_0(t, x,y) \coloneqq p_D(t, x, y)
\]
and, for $n\in \N_0:=\{0,1,2\ldots\}$, 
\[
k_{n+1}(t, x, y) \coloneqq \int_0^t\dx s \int_D \dx v \int_D p_D(s, x, v)\nu\one_{D^c}\mu(v, \dx w) k_n(t-s, w, y).
\]
By induction, we also have
\begin{equation}\label{e.Df2}
k_{n+1}(t, x, y) = \int_0^t\dx s \int_D \dx v \int_D k_n(s, x, v)\nu\one_{D^c}\mu(v, \dx w) p_D(t-s, w, y).
\end{equation}
We then define
\begin{equation}\label{e.defk}
k(t, x, y)\coloneqq \sum_{n=0}^\infty k_{n}(t,x,y).
\end{equation}
In consequence, we get the following Duhamel (or perturbation)
formulas:
\begin{align}\nonumber
k(t, x, y) &= p_D(t,x,y) + \int_0^t\dx s \int_D \dx v \int_D p_D(s, x, v)\nu\one_{D^c}\mu(v, \dx w) k(t-s, w, y)\\
&= p_D(t,x,y) + \int_0^t\dx s \int_D \dx v \int_D k(s, x, v)\nu\one_{D^c}\mu(v, \dx w) p_D(t-s, w, y)\label{e.pf}
\end{align}
for $t>0,x,y\in D$.
Furthermore, by \cite[Lemma 3.2]{Bogdan2023}, we have
\begin{equation}\label{e.CKk}
\int_D \, k(t, x, z)k(s, z, y)\dz = k(t+s, x, y), \quad s,t>0,\quad x,y\in D,
\end{equation}
and, by \cite[Theorem 3.4]{Bogdan2023}, 
\begin{equation}\label{e.cons}
\int_D \, k(t, x ,z)\dz = 1, \quad t>0,\quad x\in D.
\end{equation}
So, $k$ is a transition probability density 
on $D$. As usual, we will occasionally write
$k(t,x,A)\coloneqq \int_A k(t,x,y)\dx y$, $A\subset D$, $t>0$, $x\in D$. 

Let \(B_b(D)\) and \(C_b(D)\) denote the spaces of bounded (Borel) measurable and bounded continuous real-valued functions on \(D\), respectively.
We write $K(t)$ for the operators on $B_b(D)$ associated to the transition density $k(t,x,y)$:
\begin{equation}\label{e.dKt}
K(t)f(x):=\int_D k(t,x,y)f(y)\dx y,\qquad t>0,\quad x\in D. 
\end{equation}
Similarly, for $n\in \N_0$, we define
\begin{equation}\label{e.dKtn}
K_n(t)f(x):=\int_D k_n(t,x,y)f(y)\dx y,\qquad t>0,\quad x\in D. 
\end{equation}
In view of the Chapman--Kolmogorov equations \eqref{e.CKk},
$K(t)$ form an operator semigroup on $B_b(D)$.
Furthermore, according to \cite[Lemma 2]{MR3295773} and the remark at the end of \cite[Section 3]{MR3295773}, we have the following identity.
\begin{lem}\label{le:prop:1}
For all $t,s > 0$, $x,y\in D$, and $n=0,1,\ldots$,
\begin{equation}\label{prop:1}
\sum_{m=0}^n\int_D k_m(s,x,z)k_{n-m}(t,z,y)=k_n(s+t,x,y).
\end{equation}
\end{lem}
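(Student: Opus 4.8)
The plan is to prove \eqref{prop:1} by induction on $n$, with the Chapman--Kolmogorov relation for the Dirichlet heat kernel $p_D$ and the defining recursion for $k_n$ as the only inputs. To lighten notation, write $Q\coloneqq\nu\one_{D^c}\mu$ for the perturbing kernel, so that for $m\ge 1$
\[
k_m(s,x,z)=\int_0^s\dx r\int_D\dv\int_D p_D(r,x,v)\,Q(v,\dx w)\,k_{m-1}(s-r,w,z).
\]
Since $p_D$, $\nu$ and $\mu$ are nonnegative, so is every $k_n$, and all the interchanges of integration below are justified by Tonelli's theorem; I will not comment on this further. The base case $n=0$ is simply $\int_D p_D(s,x,z)p_D(t,z,y)\dz=p_D(s+t,x,y)$, the semigroup identity for $p_D$.

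For the inductive step, assume \eqref{prop:1} holds for a given $n$ and write $S\coloneqq\sum_{m=0}^{n+1}\int_D k_m(s,x,z)\,k_{n+1-m}(t,z,y)\dz$. I would separate the term $m=0$ and insert the recursion displayed above into $k_m$ for every $m\ge 1$. After interchanging integrals and reindexing $m\mapsto m-1$, the inner summation turns into $\sum_{m=0}^{n}\int_D k_m(s-r,w,z)\,k_{n-m}(t,z,y)\dz$, which by the inductive hypothesis equals $k_n(s+t-r,w,y)$. Thus the contribution of the terms $m\ge 1$ collapses to
\[
\int_0^{s}\dx r\int_D\dv\int_D p_D(r,x,v)\,Q(v,\dx w)\,k_n(s+t-r,w,y),
\]
which is exactly the defining integral for $k_{n+1}(s+t,x,y)$ restricted to $r\in(0,s)$.

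It therefore remains to check that the leftover term $m=0$, namely $\int_D p_D(s,x,z)\,k_{n+1}(t,z,y)\dz$, accounts for the complementary range $r\in(s,s+t)$ of that same integral. Substituting $r=s+\rho$ and splitting $p_D(s+\rho,x,v)=\int_D p_D(s,x,z)p_D(\rho,z,v)\dz$ by Chapman--Kolmogorov, the complementary range factors through $p_D(s,x,z)$, and the remaining $\rho$-integral is once more the recursion defining $k_{n+1}(t,z,y)$. This identifies the leftover term with the missing range and closes the induction.

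The argument is mostly bookkeeping; its only genuine ingredient is the Chapman--Kolmogorov identity for $p_D$, used twice---once to bring the inductive hypothesis into position and once to reassemble the $m=0$ term. The step I expect to be least transparent is precisely this last matching: conceptually, $k_{n+1}(s+t,x,y)$ is a time-ordered product of $n+2$ heat-kernel propagators $p_D$ separated by $n+1$ copies of $Q$, the index $m$ records how many of these perturbations occur before time $s$, and the cut at time $s$ falls inside the $(m+1)$-st propagator, whose splitting via Chapman--Kolmogorov produces the intermediate point $z$. Once this picture is in place the two applications of Chapman--Kolmogorov are forced, and nonnegativity removes any integrability concern.
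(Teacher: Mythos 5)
Your proof is correct: the induction on $n$, using the first-jump recursion for $k_m$ together with two applications of the Chapman--Kolmogorov identity for $p_D$ (once to invoke the inductive hypothesis after reindexing, once to identify the $m=0$ term with the range $r\in(s,s+t)$ of the defining integral for $k_{n+1}(s+t,x,y)$), is sound, and nonnegativity plus Tonelli indeed justifies every interchange. The paper itself does not prove the lemma but merely cites \cite[Lemma 2]{MR3295773}, where essentially this same inductive argument is carried out, so your route matches the proof in the cited source.
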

We will interpret \eqref{prop:1} as Chapman--Kolmogorov equations, too. To this end, we define the \emph{ladder space}
\begin{equation}
    \D := \N_0\times D.
\end{equation}
Thus, an element $\x$ of $\D$ is of the form $\x = (m, x)$, where $m\in \N_0$ and $x\in D$. We endow $\D$ with the product topology. Note that the the Borel $\sigma$-algebra on $\D$ is the product of the power set of $\N_0$ and the Borel $\sigma$-algebra on $D$.

We also define the function
\[
\k : (0,\infty)\times\D\times\D\to [0,\infty)
\]
by setting
\[
\k(t, (m,x), (n, y)) \coloneqq \begin{cases}
k_{n-m}(t, x, y), & \mbox{ if } n\geq m,\\
0, & \mbox{ else.}
\end{cases}
\]

Then  $\k$ is a transition density with respect to $\dx \x$, the product of the counting measure on $\N_0$ and the Lebesgue measure on $D$. Indeed, \eqref{prop:1} are the Chapman--Kolmogorov equations: for $\x=(m,x)$, $\y=(n,y)$ and $\z= (l,z)$,
\begin{align*}
\int_{\D} \k(t, \x, \y) \k(s, \y, \z)\dx \y & = \sum_{n=m}^l \int_D k_{n-m}(t, x, y) k_{l-n}(s, y, z)\dx y\\
& = k_{l-m}(t+s, x, z) = \k(t+s, \x, \z).
\end{align*}
In passing, we also note that by \eqref{e.defk} and  \eqref{e.cons},
\begin{equation}\label{e.consbK}
\k(t, \x, \D)= 1, \quad t>0,\quad \x\in \D.
\end{equation}
In what follows, we denote functions in $B_b(\D)$ by $\f, \g$, etc., and  the components of $\f\in B_b(\D)$
by $f_n(x) \coloneqq \f (n,x)$. Then, of course, the functions $f_n \in B_b(D)$, $n\in \N_0$, are uniformly bounded. Moreover, $\f\in C_b(\D)$ if and only if $\f\in B_b(\D)$ and $f_n\in C_b(D)$ for all $n\in \N_0$. We write $\K(t)$ for the operator on $B_b(\D)$ associated with the kernel $\k(t)$, i.e.\,
\begin{equation}
    \label{e.dKKt}
    \K(t)\f(m,x)\coloneqq \sum_{n=m}^\infty K_{n-m}(t)f_n(x),\quad n\in \N_0,\; x\in D.
\end{equation}
\begin{lem}\label{l.fKKf}
For $\f\in B_b(\D)$ and $m\in \N_0$, the series on the right-hand side of \eqref{e.dKKt} converges absolutely and uniformly for $x\in D$. 
\end{lem}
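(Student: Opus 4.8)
The plan is to dominate each summand by a nonnegative ``mass'' term and then to control the tail uniformly. Writing $\|\f\|_\infty \coloneqq \sup_{n}\|f_n\|_\infty<\infty$ and $c_j(t,x)\coloneqq k_j(t,x,D)=\int_D k_j(t,x,y)\dx y\ge 0$, the defining formula \eqref{e.dKtn} gives the pointwise bound
\begin{equation*}
|K_{n-m}(t)f_n(x)|\le \|f_n\|_\infty\, c_{n-m}(t,x)\le \|\f\|_\infty\, c_{n-m}(t,x).
\end{equation*}
By \eqref{e.defk} and Tonelli's theorem (all $k_j\ge 0$), followed by the conservativeness \eqref{e.cons}, one has $\sum_{j=0}^\infty c_j(t,x)=\int_D k(t,x,y)\dx y=1$ for every $x\in D$. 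Reindexing $j=n-m$, this already shows that the series in \eqref{e.dKKt} converges absolutely for each fixed $x$, with $\sum_{n=m}^\infty|K_{n-m}(t)f_n(x)|\le\|\f\|_\infty$. The remaining—and genuinely substantive—point is that the convergence is \emph{uniform} in $x$, i.e.\ that $\sup_{x\in D}\sum_{j\ge J}c_j(t,x)\to 0$ as $J\to\infty$. Note that the Weierstrass $M$-test does not apply directly, since $\sum_j\sup_x c_j(t,x)$ cannot be expected to be finite, and Dini's theorem is unavailable because $D$ is not compact.

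For the uniform tail estimate I would use Hypothesis \ref{hyp1}(ii). The key is the recursion obtained by integrating the definition of $k_{n+1}$ over $y\in D$: with the ``first--reflection'' kernel $F(\dx s,\dx w\,|\,x)\coloneqq\int_D p_D(s,x,\dx v)\,\nu\one_{D^c}\mu(v,\dx w)\,\dx s$ on $(0,\infty)\times D$ one has $c_0(t,x)=P_D(t)\one(x)$ and, for $J\ge 1$,
\begin{equation*}
c_{\ge J}(t,x)\coloneqq\sum_{n\ge J}c_n(t,x)=\int_{(0,t]\times D}c_{\ge J-1}(t-s,w)\,F(\dx s,\dx w\,|\,x).
\end{equation*}
Two uniform inputs feed into this. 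First, since $\mu(z,H)\ge\vartheta$ for all $z\in D^c$, the kernel $F$ satisfies $F(\dx s,H\,|\,x)\ge\vartheta\,F_{\mathrm{tot}}(\dx s\,|\,x)$, where $F_{\mathrm{tot}}(\dx s\,|\,x)=\int_D p_D(s,x,v)\nu(v,D^c)\dx v\,\dx s$ has total mass $1$; thus at each reflection the reset point lands in $H$ with conditional probability at least $\vartheta$, uniformly in the current position. Second, since $p_D(\delta,\cdot,\cdot)$ is continuous and positive, the survival mass $w\mapsto c_0(\delta,w)=\int_D p_D(\delta,w,z)\dx z$ is continuous and positive on $D$, so $\rho\coloneqq\inf_{w\in H}c_0(\delta,w)>0$ for a fixed small $\delta>0$; here $c_0(\delta,w)$ is exactly the mass that has not yet undergone a further reflection by time $\delta$ when restarted at $w$.

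Combining these, at every reflection the event ``the reset lands in $H$ and the next inter-reflection gap exceeds $\delta$'' has conditional probability at least $p_0\coloneqq\vartheta\rho>0$, uniformly in the current position. If $J$ reflections occur within time $t$, then the corresponding inter-reflection gaps are positive and sum to at most $t$, so at most $t/\delta$ of them can exceed $\delta$. Iterating the recursion and using the two inputs above therefore yields the uniform comparison
\begin{equation*}
\sup_{x\in D}c_{\ge J}(t,x)\le \Pb\big(\mathrm{Bin}(J-1,p_0)\le t/\delta\big),
\end{equation*}
whose right-hand side tends to $0$ (in fact exponentially, by a Chernoff bound) as $J\to\infty$, since the binomial mean $(J-1)p_0$ diverges while the threshold $t/\delta$ is fixed. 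This gives $\sup_{x}\sum_{j\ge J}c_j(t,x)\to 0$ and hence the claimed uniform convergence. I expect the main obstacle to be precisely this uniform tail estimate: the crude identity $\sum_j c_j=1$ is not enough, because near $\partial D$ reflections accumulate arbitrarily fast, so the decay of the tail must be forced by the concentration Hypothesis \ref{hyp1}(ii), which is exactly what prevents the reflection count from exploding uniformly over the starting point.
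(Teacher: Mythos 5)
Your argument is correct in substance, but it takes a genuinely different and much longer route than the paper. The paper's proof is a one-liner: by \cite[Corollary 3.5]{Bogdan2023} there are constants $c>0$ and $\gamma\in(0,1)$ such that $K_n(t)\one(x)\le c\gamma^n$ for all $n\in\N_0$ and $x\in D$, so the series in \eqref{e.dKKt} is dominated termwise by the summable constants $c\gamma^{n-m}\|\f\|_\infty$ and the Weierstrass $M$-test gives absolute and uniform convergence at once. This shows, incidentally, that your aside that the $M$-test ``does not apply directly'' because $\sum_j\sup_x c_j(t,x)$ ``cannot be expected to be finite'' is mistaken: that sup-sum is in fact geometric, and your own Chernoff estimate $\sup_x c_{\ge J}(t,x)\le \Pb\big(\mathrm{Bin}(J-1,p_0)\le t/\delta\big)$, which decays exponentially in $J$, contradicts the aside. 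What you have essentially done is reprove, from Hypothesis \ref{hyp1}(ii) alone, a tail version of the cited geometric bound: your renewal recursion for $c_{\ge J}$, the uniform landing probability $\vartheta$ in the compact set $H$, and the uniform survival probability $\rho=\inf_{w\in H}P_D(\delta)\one(w)>0$ are exactly the right ingredients, and the one step you assert rather than prove (``iterating the recursion \dots yields the uniform comparison'') can be made rigorous by realizing the Markov renewal chain $(S_k,W_k)$ with kernel $F$ on a canonical space via Ionescu--Tulcea---note $F(\cdot\mid x)$ is a genuine probability kernel since $\pp^x(\tau_D<\infty)=1$ for bounded $D$---and then invoking the standard stochastic domination of the success indicators $B_k=\one\{W_k\in H,\,S_{k+1}>\delta\}$ (each with conditional success probability at least $p_0=\vartheta\rho$) by i.i.d.\ Bernoulli variables. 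The trade-off: the paper's route is short but outsources the uniform geometric decay to \cite{Bogdan2023}, whereas yours is self-contained within the standing hypotheses and makes transparent \emph{why} Hypothesis \ref{hyp1}(ii) forces uniformity in $x$ even though reflections can accumulate fast near $\partial D$---at the cost of an auxiliary probabilistic construction and a domination lemma whose details you leave implicit.
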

\begin{proof}
   By \cite[Corollary 3.5]{Bogdan2023}, there are constants $\gamma \in (0,1)$ and $c>0$ such that $K_n(t)\one(x) \leq c\gamma^n$ for all
   $n\in \N_0$ and $x\in D$. This clearly implies the result.
\end{proof}
The reader will have no difficulty proving that, by the Chapman--Kolmogorov equations,
$\K = (\K(t), t>0)$ is a semigroup on $B_b(\D)$. We can recover $K$ from $\K$ as follows:
Given $f\in B_b(D)$, we define $\f \in B_b(\D)$ by setting $\f(m,x) = f(x)$ for all $m\in \N_0$, $x\in D$. Then,
    \begin{align}
       \K(t)\f(m,x)
        & = \sum_{n=m}^\infty\int_D k_{n-m}(t, x, y)f(y)\, \dx y =K(t)f(x).\label{eq.Xdistribution2}
    \end{align}
    
If $E$ is a Polish space (in this article, we will only use $E=D$ and $E=\D$), then a \emph{$C_b$-Feller semigroup} is a family $(T(t), t>0)$ of sub-Markovian kernel operators on $B_b(E)$ such that (i)
for $t,s>0$, the \emph{semigroup law} $T(t+s)=T(t)T(s)$ holds, (ii) $T(t)f \in C_b(E)$ for all $f\in C_b(E)$, and (iii) 
for $f\in C_b(E)$, we have $T(t)f\to f$ as $t\to 0$, uniformly on
compact subsets of $E$.
A sub-Markovian kernel operator $T$ (or its kernel) is called \emph{strongly Feller} if $Tf\in C_b(E)$ for all $f\in B_b(E)$.
A $C_b$-Feller semigroup $(T(t), t>0)$ is called \emph{strongly Feller} if $T(t)$ is strongly Feller for every $t>0$ (this strengthens the condition (ii) above).
See \cite[Appendix]{Bogdan2023} for more information. For instance, 
$P_D$ and $K$ are a $C_b$-Feller and strongly Feller 
semigroup, see \cite[Lemma 2.2 and Theorem 5.3]{Bogdan2023}. While $P_D$ is also a Feller semigroup,
we recall that $K$ is not.

\begin{prop}
    \label{p.semigroup_ladderspace} 
$\K$ is a $C_b$-Feller and strongly Feller semigroup.
\end{prop}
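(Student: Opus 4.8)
The plan is to verify, in turn, that $\K$ is Markovian, satisfies the semigroup law, is strongly Feller, and is strongly continuous at $t=0$ uniformly on compacta; the last two properties yield the $C_b$-Feller and strongly Feller assertions. The Markov property is \eqref{e.consbK}, and the semigroup law is the Chapman--Kolmogorov identity already recorded above, so no work is needed there. The key structural observation is that, since $\N_0$ carries the discrete topology, a function $\g\in B_b(\D)$ lies in $C_b(\D)$ precisely when each component $g_m\in C_b(D)$; hence every continuity statement for $\K$ reduces to a statement about the components $x\mapsto \K(t)\f(m,x)=\sum_{j\ge 0}K_j(t)f_{m+j}(x)$, with uniformity in $m$ needed only for the strong continuity at $0$.

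For the strong Feller property, fix $\f\in B_b(\D)$, $t>0$ and $m\in\N_0$, and split off the $j=0$ term: $\K(t)\f(m,\cdot)=P_D(t)f_m+\sum_{j\ge 1}K_j(t)f_{m+j}$. The first summand is continuous because $P_D$ is strongly Feller. For the tail, I would apply the defining Duhamel recursion to each $k_j$ with $j\ge 1$, write $K_j(t)f_{m+j}=\int_0^t P_D(s)\big[R\,K_{j-1}(t-s)f_{m+j}\big]\ds$ with $R\phi(v):=\int_D \nu\one_{D^c}\mu(v,\dx w)\phi(w)$, then sum over $j\ge 1$ and reindex $i=j-1$ to obtain
\[
\sum_{j\ge 1}K_j(t)f_{m+j}(x)=\int_0^t P_D(s)\big[R\,\K(t-s)\f(m+1,\cdot)\big](x)\ds .
\]
Here $w\mapsto \K(t-s)\f(m+1,w)$ is bounded by $\|\f\|_\infty$ by Lemma \ref{l.fKKf} and \eqref{e.consbK}, so this tail has exactly the form of the perturbation term $K(t)g-P_D(t)g=\int_0^t P_D(s)[R\,K(t-s)g]\ds$ appearing in the Duhamel formula \eqref{e.pf} for $K$, with the bounded family $g=\K(t-s)\f(m+1,\cdot)$ in place of $K(t-s)g$. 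Its continuity in $x$ therefore follows from the very estimates that prove $K$ to be strongly Feller in \cite[Theorem 5.3]{Bogdan2023}. This gives $\K(t)\f(m,\cdot)\in C_b(D)$ for every $m$, that is, $\K(t)\f\in C_b(\D)$; in particular condition (ii) of the $C_b$-Feller definition holds.

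The main obstacle is precisely this continuity assertion for the perturbation integral, where $R\phi$ is only locally integrable and blows up at $\partial D$; one cannot treat the summands $K_j(t)f_{m+j}$ as having bounded kernels, since the densities $k_j(t,\cdot,\cdot)$ need not be bounded when $\mu$ carries atoms. The argument must therefore be carried out at the level of the time-integrated perturbation, where the singularity of $R\phi$ is absorbed by the smoothing of $P_D$ and the integration in $s$, exactly as in \cite{MR3295773,Bogdan2023}; the interchange of summation and integration used above is justified by the same absolute convergence. The only genuinely new feature on the ladder space is the shift $m\mapsto m+1$, which is harmless: all the relevant bounds depend on $\f$ only through $\|\f\|_\infty$ and on the geometric estimate $K_j(t)\one\le c\gamma^j$ of \cite[Corollary 3.5]{Bogdan2023}, hence are uniform in $m$.

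Finally, for the strong continuity at $0$, let $\f\in C_b(\D)$ and let $\Gamma\subset\D$ be compact, so that $\Gamma\subset\{0,\dots,N\}\times C$ for some $N\in\N$ and compact $C\subset D$. I would again split $\K(t)\f(m,x)-f_m(x)=\big(P_D(t)f_m(x)-f_m(x)\big)+\sum_{j\ge 1}K_j(t)f_{m+j}(x)$. The bracketed term tends to $0$ uniformly on $C$ for each of the finitely many indices $m\le N$, since $P_D$ is $C_b$-Feller. For the tail I would use the crude, $m$-independent bound
\[
\Big|\sum_{j\ge 1}K_j(t)f_{m+j}(x)\Big|\le \|\f\|_\infty\sum_{j\ge 1}K_j(t)\one(x)=\|\f\|_\infty\big(1-P_D(t)\one(x)\big),
\]
where the identity uses \eqref{e.cons}; since $1-P_D(t)\one(x)=\pp^x(\tau_D\le t)\le \pp^0\big(\sup_{s\le t}|Y_s|\ge \dist(C,\partial D)\big)\to 0$ as $t\to 0$, this tends to $0$ uniformly on $C$ and uniformly in $m$. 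Combining the two estimates yields $\K(t)\f\to\f$ uniformly on $\Gamma$, completing the verification.
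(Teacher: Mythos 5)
Your proof is correct, and the strong Feller part takes a genuinely different route from the paper. The paper works term by term: using Lemma \ref{le:prop:1} it first shows that each $K_n(t)$ is strongly Feller, by approximating $K_n(t)f$ locally uniformly by the continuous functions $P_D(s)K_n(t-s)f$ as $s\to 0$ with error at most $\|f\|_\infty\,\pp^x(\tau_D\le s)$, and then passes to $\K(t)$ via the absolute and uniform convergence of the series (Lemma \ref{l.fKKf}, i.e.\ the geometric bound $K_n(t)\one\le c\gamma^n$). You instead resum the entire tail into a single Duhamel perturbation term $\int_0^t P_D(s)\bigl[\nu\one_{D^c}\mu\,\K(t-s)\f(m+1,\cdot)\bigr](x)\ds$ --- the reindexing and the Tonelli interchange are justified exactly as you say --- and then invoke the estimates behind \cite[Theorem 5.3]{Bogdan2023}. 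This works and is arguably slicker, but note that it is not a black-box citation: the cited statement concerns the specific family $K(t-s)g$, whereas you need continuity of $x\mapsto\int_0^t P_D(s)\bigl[\nu\one_{D^c}\mu\,\psi_{t-s}\bigr](x)\ds$ for an \emph{arbitrary} uniformly bounded measurable family $(\psi_s)$. The claim is true --- the Vitali/Scheff\'e argument uses only the domination $|\nu\one_{D^c}\mu\,\psi_s|\le\|\f\|_\infty\kappa_D$, pointwise continuity of $p_D(s,\cdot,v)$, and continuity of $x\mapsto\pp^x(\tau_D\le t)=1-P_D(t)\one(x)$, which follows since $P_D$ is strongly Feller --- but a complete write-up should spell out these few lines rather than gesture at ``the very estimates''; your own remarks about the singularity of the perturbation at $\partial D$ show you see why this is the crux. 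Your continuity-at-zero argument essentially coincides with the paper's: the same tail bound $\|\f\|_\infty\,\pp^x(\tau_D\le t)$ (you derive it from conservativeness \eqref{e.cons}, the paper via the Ikeda--Watanabe formula, cf.\ \eqref{e.ebteet}), and you prove the locally uniform smallness of $\pp^x(\tau_D\le t)$ by the maximal inequality where the paper cites \cite[Lemma 2]{chung86}. On balance, the paper's route buys the independently reusable intermediate fact that every $K_n(t)$ is strongly Feller, while yours shortens the ladder-space bookkeeping (the shift $m\mapsto m+1$ is indeed harmless, with all bounds depending only on $\|\f\|_\infty$) at the cost of reopening the proof of a theorem from \cite{Bogdan2023}.
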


\begin{proof}
The Chapman--Kolmogorov equations are proved above. Let us next verify that every operator $\K(t)$ has the strong Feller property. 

As a preliminary result, we first prove that for every  $n\in \N_0$ and $t>0$, the operator $K_n(t)$ 
is strongly Feller. As $k_0 = p_D$ is strongly Feller, we only need to consider $n\in \N$. 
Let $0<s<t$ and $f\in B_b(D)$. Writing $g_s \coloneqq K_{n}(t-s)f$, it follows from Lemma \ref{le:prop:1}, that
\[
K_{n}(t)f - P_D(s)g_s = \sum_{m=1}^n K_m(s)K_{n-m}(t-s)f.
\]
Since $\|K_{n-m}(t-s)f\|_\infty\leq \|f\|_\infty$, we get
\begin{align}
& \phantom{=} |K_n(t)f(x) - P_D(s)g_s(x)|
 \leq \sum_{m=1}^n K_m(s)\|f\|_\infty(x)\nonumber\\
 & = \|f\|_\infty\sum_{m=1}^n \int_0^s\dx r\int_D\, \dx v \,p_D(s,x,v) \nu\one_{D^c}\mu(v, \dx w)k_{m-1}(s-r, w, D)\nonumber\\
 & \leq \|f\|_\infty\int_0^s\, \dx r\int_D\, \dx v \,p_D(s,x,v)\nu\one_{D^c}\mu(v, \dx w) k(s-r, w,D)\nonumber\\
 & = \|f\|_\infty \pp^x(\tau_D \leq s),\label{e.ebteet}
\end{align}
where $\tau_D$ is the first exit time \eqref{e.dfet} of the $\alpha$-stable process $Y$ from $D$
and the last equality follows from \eqref{e.cons} and the Ikeda--Watanabe formula \eqref{eq:IW}.

Note that $\pp^x(\tau_D \le s) \to 0$ as $s\to 0$, uniformly on compact subsets of $D$, see \cite[Lemma 2]{chung86}. 
As $P_D$ is strongly Feller, for every $s$, the function $P_D(s)g_s$ is continuous. 
The above shows that these functions converge locally uniformly to $K_n(t)f$ as $s\to 0$, 
so the latter function is continuous as well.

Now let $\f \in B_b(\D)$ be given. As $\N_0$ is discrete, to establish the strong Feller property for $\K(t)$, 
it suffices to prove that for fixed $m\in \N_0$,
the function $x\mapsto \K(t)\f (m, x)$ is continuous. By our preliminary result, for every $n\geq m$, the function
$x\mapsto K_{n-m}(t)f_n(x)$ is continuous. Together with \eqref{e.dKKt} and Lemma \ref{l.fKKf}, this
implies the continuity of $\K(t)\f(m,\cdot)$.
\smallskip

It remains to prove (iii) in the definition of the \(C_b\)-Feller semigroup. To that end, let \(x \in D\), \(m \in \N_0\), and observe that for \(\f \in C_b(\D)\), similarly to \eqref{e.ebteet}, 
\[
\Big|\sum_{n=m+1}^\infty K_{n-m}(t)f_n(x) \Big| \leq \|\f\|_\infty \pp^x(\tau_D \leq t).
\]
Let $t\to 0$. The right-hand side converges to $0$ uniformly on compact subsets of $D$.
Noting that $K_0(t)f_m = P_D(t)f_m
\to f_m$ uniformly on
compact subsets of $D$  because $P_D$ is $C_b$-Feller, it follows from \eqref{e.dKKt} that $\K(t)\f(m,x) \to f_m(x)$ uniformly for $x$ in compact 
subsets of $D$. As $m$ was arbitrary, this implies $\K(t)\f \to \f$ uniformly on compact subsets of $\D$.
\end{proof}

\section{Excessive functions}\label{s.ef}
Let $\delta_D(x)={\rm dist}(x,\partial D)$, $x\in \Rd$.
Our next goal is the following theorem.

\begin{thm}\label{t.ef}
Let $\lambda>0$. There is a continuous function $v\colon D \to (0, \infty)$ which is $\lambda$-excessive for $K$ and satisfies $v(x)\to \infty$ as $\delta_D(x)\to 0$.
\end{thm}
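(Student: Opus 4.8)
The plan is to realise $v$ as a $\lambda$-potential. For $f\ge 0$ on $D$ set $U_\lambda f(x):=\int_0^\infty e^{-\lambda t}K(t)f(x)\dt$; whenever this is finite it is automatically $\lambda$-excessive for $K$, since $e^{-\lambda s}K(s)U_\lambda f=\int_s^\infty e^{-\lambda t}K(t)f\dt$ increases to $U_\lambda f$ as $s\downarrow 0$ and is dominated by it. So the entire problem reduces to choosing a nonnegative $f$ for which $U_\lambda f$ is finite, continuous and positive on $D$, yet blows up at $\partial D$.

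The source of the blow-up will be the killed process. From the perturbation formula \eqref{e.pf} the reflection term is nonnegative, hence $K(t)f\ge P_D(t)f$ for $f\ge 0$, and consequently $U_\lambda f\ge G_D^\lambda f:=\int_0^\infty e^{-\lambda t}P_D(t)f\dt$, the $\lambda$-resolvent of the killed stable process. A short-time scaling estimate for $Y$---namely $\pp^x\big(Y_s\in B(x,\delta_D(x)/2)\text{ for all }s\le \delta_D(x)^\alpha\big)\ge c>0$ with $c$ independent of $x$---yields, for $x$ near $\partial D$, the lower bound $G_D^\lambda f(x)\gtrsim \delta_D(x)^\alpha\,\inf_{B(x,\delta_D(x)/2)}f$. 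Thus any $f$ with $\delta_D(x)^\alpha f(x)\to\infty$ as $\delta_D(x)\to 0$ forces $v=U_\lambda f\to\infty$ at the boundary; a natural choice is $f=\delta_D^{-\gamma}$ with $\gamma>\alpha$, which gives $G_D^\lambda f(x)\gtrsim \delta_D(x)^{\alpha-\gamma}$.

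For finiteness and continuity in the interior I would first record, from the second Duhamel identity \eqref{e.Df2}, that in every term of the expansion of $K(t)f$ the reflection kernel $\nu\one_{D^c}\mu$ is applied to a function already smoothed by $P_D$; since $P_D(\tau)f$ is bounded on $D$ for $\tau>0$ (it vanishes at $\partial D$, as the killed density does), the measure $\mu$ acts only on bounded functions and its possible concentration near $\partial D$ is harmless. The remaining singular factor is $\nu(\cdot,D^c)$, which behaves like $\delta_D^{-\alpha}$ near $\partial D$ and is integrated against $p_D$-densities; here $\alpha<2$ is exactly what makes those integrals converge. Continuity of $v$ then follows by truncating $f_k:=f\wedge k\uparrow f$: each $U_\lambda f_k$ is continuous by the strong Feller property of $K$, and the tail $U_\lambda\big[(f-k)^+\big]$---supported in a shrinking boundary layer---tends to $0$ locally uniformly on $D$ by dominated convergence. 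Positivity is immediate from $v\ge G_D^\lambda f>0$.

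The main obstacle is the tension between the last two paragraphs: boundary blow-up needs $f$ strictly more singular than $\delta_D^{-\alpha}$, whereas at precisely that singularity the interior finiteness of $K(t)f$ is delicate when $\mu$ concentrates mass near $\partial D$, since then the reflected density $k(t,x,y)$ need not decay like $\delta_D(y)^{\alpha/2}$ in $y$. The resolution I would pursue exploits Hypothesis~\ref{hyp1}(ii): a fixed fraction $\vartheta$ of every return lands in the compact set $H\subset D$, and together with the geometric bound $K_n(t)\one\le c\gamma^n$ from \cite[Corollary 3.5]{Bogdan2023} this prevents reflections from accumulating, so the contribution of the near-boundary returns to $K(t)f(x)$ stays summable. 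Quantifying this, via the sharp heat-kernel and Green-function estimates for $p_D$ and $g_D$, so as to pin down an admissible exponent $\gamma\in(\alpha,\alpha/2+1)$ (equivalently, the precise admissible blow-up rate of $f$), is where the real work lies and is the step I expect to be hardest.
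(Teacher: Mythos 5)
Your skeleton---realize $v$ as a resolvent potential $U_\lambda f$, note that finiteness then makes $\lambda$-excessivity automatic, and force boundary blow-up through the killed part---is also the paper's. But the concrete choice $f=\delta_D^{-\gamma}$ with $\gamma>\alpha$ breaks at exactly the step you flag as hardest, and not because the estimates are delicate: under Hypothesis~\ref{hyp1} alone, $U_\lambda f$ can be identically $+\infty$, so no exponent window can work. Write $G_D^\lambda f\coloneqq\int_0^\infty e^{-\lambda t}P_D(t)f\dt$; your own lower bound gives $G_D^\lambda f(y)\to\infty$ as $\delta_D(y)\to 0$. Fix $x_0\in D$, pick $y_k\to\partial D$ with $G_D^\lambda f(y_k)\ge 4^k$, and let
\[
\mu(z,\cdot)\coloneqq\vartheta\,\delta_{x_0}(\cdot)+(1-\vartheta)\sum_{k=1}^\infty 2^{-k}\delta_{y_k}(\cdot),\qquad z\in D^c,
\]
which satisfies Hypothesis~\ref{hyp1} with $H=\{x_0\}$ (it is even of the ``independent'' form of Section~\ref{s.ir}). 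Then $\mu G_D^\lambda f\equiv\infty$ on $D^c$, and by Tonelli the Laplace transform of the first perturbation term is
\[
\int_0^\infty e^{-\lambda t}K_1(t)f\dt=\int_0^\infty e^{-\lambda s}P_D(s)\,\nu\one_{D^c}\mu\, G_D^\lambda f\ds\equiv\infty
\]
on $D$, because $p_D>0$ and $\kappa_D>0$; hence $U_\lambda f\equiv\infty$. Neither of your proposed rescue tools can repair this: the geometric decay of $K_n(t)\one$ from \cite[Corollary 3.5]{Bogdan2023} controls $K_n$ only on \emph{bounded} functions, and Hypothesis~\ref{hyp1}(ii) bounds the mass of $\mu$ in $H$ from \emph{below} but gives no upper control on the complementary mass $1-\vartheta$, which may sit arbitrarily close to $\partial D$---precisely where your potential must blow up. Two subsidiary claims are also off: the assertion that in \eqref{e.Df2} the kernel $\nu\one_{D^c}\mu$ only ever meets functions smoothed by $P_D$ fails because the inner time $t-s$ runs down to $0$, so after time integration $\mu$ effectively acts on $G_D^\lambda f$, unbounded by design; and the window $\gamma\in(\alpha,\alpha/2+1)$ presupposes $\delta_D^{\alpha/2}$ boundary decay of $p_D$ and $g_D$, a $C^{1,1}$ phenomenon, whereas $D$ here is only Lipschitz.

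The paper's proof avoids all of this by choosing $f$ in the range of $\nu\one_{D^c}$: for bounded $g\ge 0$ on $D^c$ it takes $f=\nu\one_{D^c}g$, so the killed potential has the closed probabilistic form $u^\lambda_g(x)=\ee^x e^{-\lambda\tau_D}g(Y_{\tau_D})\le\|g\|_\infty$, bounded no matter how singular $f$ is at $\partial D$. Continuity of $u^\lambda_\one$, the strict bound $u^\lambda_\one<1$, and Hypothesis~\ref{hyp1}(ii) then give $\mu u^\lambda_\one\le 1-\ve$ on $D^c$, which makes the perturbation series for $v^\lambda_g=\int_0^\infty e^{-\lambda t}K(t)\nu\one_{D^c}g\dt$ geometrically convergent, $v^\lambda_\one\le 1/\ve$ (Lemma~\ref{l.h1f}); this is exactly the mechanism neutralizing adversarial concentration of $\mu$ near $\partial D$ that your argument lacks. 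The blow-up is then assembled diagonally rather than at a prescribed polynomial rate: $g=\sum_n\one_{\Pi_{r_n}}$ with $r_n$ chosen so small that $v^\lambda_{\one_{\Pi_{r_n}}}\le 2^{-n}$ on compacts $D_n\uparrow D$ (Lemma~\ref{l.eshgc}), while $v^\lambda_g\ge u^\lambda_g\to\infty$ at $\partial D$ since $\liminf_{x\to\partial D}u^\lambda_{\one_{\Pi_r}}(x)=1$ by regularity of the Lipschitz set. To salvage your scheme you would have to pick $f$ whose \emph{killed} $\lambda$-potential stays bounded, so that $\mu$ integrates it uniformly in $z$; within the generality of Hypothesis~\ref{hyp1}, that essentially forces the paper's choice.
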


As usual, $v\ge 0$ is called \emph{$\lambda$-supermedian} for the semigroup $K$ if 
$e^{-\lambda t}K(t) v\le v $ for all $t>0$. It is called \emph{$\lambda$-excessive} if, additionally, $e^{-\lambda t}K(t) v\to v$ pointwise
as $t\to 0$, see \cite[II.3]{MR850715}.
As we shall see in the next section, Theorem~\ref{t.ef} leads to the existence of a  Hunt process corresponding to the semigroup $K$.
We will prove Theorem~\ref{t.ef} after  the following auxiliary considerations.

For $\lambda\ge 0$, $g\ge 0$ on $D^c$, and $x\in D$, we define, using \eqref{eq:IW},
\begin{align}\label{e.fug}
u^\lambda_g(x)&:=\ee^x e^{-\lambda\tau_D}g(Y_{\tau_D})=\int\limits_0^\infty \int\limits_D \int\limits_{D^c} e^{-\lambda t}p_D(t,x,y) \nu(y,z) g(z)\dx z\dx y\dt  \\
&= \int_0^\infty e^{-\lambda t} P_D(t) \nu\one_{D^c} g (x)\dt .\label{e.dulgbs}
\end{align}
For instance, for $g=\one$, the constant function on $D^c$, we get 
\begin{equation}\label{e.dul1}
u^\lambda_\one (x)=\int_0^\infty e^{-\lambda t} P_D(t) \kappa_D (x)\dt.
\end{equation} 
Here
\begin{equation}\label{e.dkD}
\kappa_D(x)\coloneqq \int_{D^c} \nu(x,z)\dx z,\quad x\in D,
\end{equation}
is called the \emph{killing intensity}.
Similarly to \eqref{e.dulgbs}, we define
\begin{align}\label{e.dvlg}
v^\lambda_g(x)&:= \int_0^\infty e^{-\lambda t} K(t) \nu\one_{D^c} g (x)\dt, \quad x\in D.
\end{align}
For instance, 
\begin{equation}\label{e.dvl1}
v^\lambda_\one (x)=\int_0^\infty e^{-\lambda t} K(t) \kappa_D (x)\dt,\quad x\in D.
\end{equation}
Of course, $v^\lambda_g\ge u^\lambda_g$. In fact, by the Duhamel formula \eqref{e.pf}, for $x\in D$,
\begin{align}\nonumber
v^\lambda_g(x)&=\int_0^\infty e^{-\lambda t} P_D(t) \nu\one_{D^c} g (x)\dt \\\nonumber
& \qquad +
\int_0^\infty e^{-\lambda t} \int_0^t K(s) \nu\one_{D^c}\mu P_D(t-s)\nu\one_{D^c} g (x)\dx s\dt \\
\nonumber
&=u^\lambda_g(x)+
\int_0^\infty e^{-\lambda s} K(s) \nu\one_{D^c}\mu  \int_s^\infty e^{-\lambda(t-s)} P_D(t-s)\nu\one_{D^c} g (x)\dt \dx s\\
\label{e.re}
&=u^\lambda_g(x)+
\int_0^\infty e^{-\lambda s} K(s) \nu\one_{D^c}\mu  u^\lambda_g (x)\dx s.
\end{align}
Similarly, by the variant of Duhamel's formula preceding \eqref{e.pf}, 
\begin{align}
v^\lambda_g(x)
&=u^\lambda_g(x)+
\int_0^\infty e^{-\lambda s} P_D(s) \nu\one_{D^c}\mu  v^\lambda_g (x)\dx s\nonumber\\
\label{e.rev}
&=u^\lambda_g(x)+
\int_0^\infty \int_D \int_{D^c} e^{-\lambda s} p_D(s,x,y)\nu(y,z) \mu v^\lambda_g(z) \dz \dy \ds.
\end{align}

\begin{lem}\label{l.h1f}
For $\lambda> 0$ and bounded $g$, $v^\lambda_g$ is finite and continuous on $D$.
\end{lem}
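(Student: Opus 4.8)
The plan is to reduce the whole statement to two facts: the finiteness (in fact boundedness) of the single function $v^\lambda_\one$, and the continuity of potentials of the form $u^\lambda_\phi$ for bounded $\phi$ on $D^c$. Throughout I would write $G^\lambda_P f:=\int_0^\infty e^{-\lambda t}P_D(t)f\dt$ and $G^\lambda_K f:=\int_0^\infty e^{-\lambda t}K(t)f\dt$, so that $u^\lambda_\phi=G^\lambda_P[\nu\one_{D^c}\phi]$ and $v^\lambda_g=G^\lambda_K[\nu\one_{D^c}g]$, both extended to signed bounded data by linearity. Since $|\nu\one_{D^c}g|\le\|g\|_\infty\kappa_D$ pointwise and $G^\lambda_K$ is a positive operator, one gets $|v^\lambda_g|\le\|g\|_\infty v^\lambda_\one$; hence finiteness of every $v^\lambda_g$ follows once $v^\lambda_\one$ is shown finite, and $\mu v^\lambda_g$ is a bounded function on $D^c$.

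For finiteness I would first record $u^\lambda_\one=G^\lambda_P\kappa_D$ (from \eqref{e.dul1}) and $u^\lambda_\one(x)=\ee^x e^{-\lambda\tau_D}\le 1$, the latter being \emph{strictly} below $1$ for every $x\in D$ because $\tau_D>0$ $\pp^x$-a.s.\ on the open set $D$. Writing $u^\lambda_\one=1-\lambda G^\lambda_P\one$ and using that $G^\lambda_P\one$ is continuous (see the sub-lemma below), $u^\lambda_\one$ is continuous, so its maximum $c_H$ over the compact set $H$ of Hypothesis \ref{hyp1}(ii) satisfies $c_H<1$. For $z\in D^c$ this yields the crucial contraction estimate
\[
\mu u^\lambda_\one(z)\le c_H\,\mu(z,H)+\bigl(1-\mu(z,H)\bigr)=1-(1-c_H)\mu(z,H)\le 1-(1-c_H)\vartheta=:\rho<1.
\]
Setting $W:=\nu\one_{D^c}\mu$, a Tonelli/Laplace-transform computation applied to the perturbation series \eqref{e.defk} gives $v^\lambda_\one=\sum_{n\ge 0}(G^\lambda_P W)^n u^\lambda_\one$. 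Since $W u^\lambda_\one=\nu\one_{D^c}\mu u^\lambda_\one\le\rho\,\kappa_D$ and $G^\lambda_P W$ is positive, $G^\lambda_P W u^\lambda_\one\le\rho\,G^\lambda_P\kappa_D=\rho\,u^\lambda_\one$, whence by induction $(G^\lambda_P W)^n u^\lambda_\one\le\rho^n u^\lambda_\one$. Summing the geometric series gives $v^\lambda_\one\le(1-\rho)^{-1}u^\lambda_\one\le(1-\rho)^{-1}$, so $v^\lambda_\one$, and therefore every $v^\lambda_g$, is bounded.

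For continuity the auxiliary sub-lemma is that $G^\lambda_P$ maps $B_b(D)$ into $C_b(D)$: for $f\in B_b(D)$ the difference $G^\lambda_P f-e^{-\lambda\ve}P_D(\ve)G^\lambda_P f=\int_0^\ve e^{-\lambda t}P_D(t)f\dt$ has sup-norm $\le\ve\|f\|_\infty$, while $P_D(\ve)G^\lambda_P f\in C_b(D)$ by the strong Feller property, so $G^\lambda_P f$ is a uniform limit of continuous functions; this in particular gives the continuity of $G^\lambda_P\one$ and $u^\lambda_\one$ used above. For a general bounded $\phi$ on $D^c$ the same splitting gives $u^\lambda_\phi=A_\ve+e^{-\lambda\ve}P_D(\ve)u^\lambda_\phi$ with $A_\ve:=\int_0^\ve e^{-\lambda t}P_D(t)[\nu\one_{D^c}\phi]\dt$; here $\nu\one_{D^c}\phi$ is unbounded near $\partial D$, so instead of a uniform bound I would dominate $|A_\ve|\le\|\phi\|_\infty A^\one_\ve$, where $A^\one_\ve=u^\lambda_\one-e^{-\lambda\ve}P_D(\ve)u^\lambda_\one$. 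Because $u^\lambda_\one\in C_b(D)$ and $P_D$ is $C_b$-Feller, $P_D(\ve)u^\lambda_\one\to u^\lambda_\one$ locally uniformly, so $A^\one_\ve\to 0$ locally uniformly; hence $e^{-\lambda\ve}P_D(\ve)u^\lambda_\phi\to u^\lambda_\phi$ locally uniformly, exhibiting $u^\lambda_\phi$ as a locally uniform limit of continuous functions, hence continuous. Finally, as $\mu v^\lambda_g$ is bounded on $D^c$, the renewal identity \eqref{e.rev} reads $v^\lambda_g=u^\lambda_g+u^\lambda_{\mu v^\lambda_g}$, a sum of two continuous potentials, so $v^\lambda_g$ is continuous.

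The main obstacle is the finiteness step: the whole estimate rests on the \emph{strict} contraction $\rho<1$, which is exactly where the concentration Hypothesis \ref{hyp1}(ii) is indispensable. Without it, $\mu u^\lambda_\one$ could approach $1$ (if $\mu$ charges a neighborhood of $\partial D$, where $u^\lambda_\one\approx 1$) and the series would diverge, mirroring an accumulation of infinitely many reflections. The continuity step is secondary but delicate because $\nu\one_{D^c}g$ is comparable to $\kappa_D$ and blows up at $\partial D$, so the strong Feller property cannot be applied to it directly; the remedy is to channel the boundary singularity through the bounded continuous function $u^\lambda_\one$ and the $C_b$-Feller convergence $P_D(\ve)u^\lambda_\one\to u^\lambda_\one$.
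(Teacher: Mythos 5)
Your proposal is correct, and its finiteness half is essentially the paper's argument: both exploit the strict pointwise bound $u^\lambda_\one<1$, the continuity of $u^\lambda_\one$, and Hypothesis \ref{hyp1}(ii) to obtain a uniform contraction $\mu u^\lambda_\one\le\rho<1$ on $D^c$, and then sum a geometric series over the perturbation series \eqref{e.defk}; the paper phrases this as the induction $\int_0^\infty e^{-\lambda t}K_n(t)\kappa_D\dt\le(1-\ve)^n$ using the Duhamel variant \eqref{e.Df2}, whereas you write it as $(G^\lambda_P W)^n u^\lambda_\one\le\rho^n u^\lambda_\one$ with $W\coloneqq\nu\one_{D^c}\mu$ --- the same computation composed in the opposite order. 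The continuity half is where you genuinely diverge. The paper imports the continuity of $u^\lambda_\one$ from \cite[Lemma 2.5]{Bogdan2023} and then deduces continuity of $v^\lambda_g$ from \eqref{e.rev} by Vitali's convergence theorem: the integrand there is dominated by a multiple of the integrand in \eqref{e.fug} with $g=\one$ and $\lambda=0$, whose integral $u^0_\one\equiv 1$ is continuous, which yields the needed uniform integrability. You instead use a semigroup-splitting argument: $u^\lambda_\phi=A_\ve+e^{-\lambda\ve}P_D(\ve)u^\lambda_\phi$ with $|A_\ve|\le\|\phi\|_\infty\bigl(u^\lambda_\one-e^{-\lambda\ve}P_D(\ve)u^\lambda_\one\bigr)\to 0$ locally uniformly by the $C_b$-Feller property, so that $u^\lambda_\phi$ is a locally uniform limit of continuous functions via the strong Feller property, and then \eqref{e.rev} reads $v^\lambda_g=u^\lambda_g+u^\lambda_{\mu v^\lambda_g}$ with $\mu v^\lambda_g$ bounded. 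Your route is self-contained: the sub-lemma that $G^\lambda_P$ maps $B_b(D)$ into $C_b(D)$, combined with the identity $u^\lambda_\one=1-\lambda G^\lambda_P\one$, replaces the external citation, and it is in the same spirit as the resolvent strong-Feller observation used later in the proof of Theorem \ref{t.existence}; the paper's Vitali argument is shorter once the continuity of $u^\lambda_\one$ is granted, handling the boundary blow-up of $\nu\one_{D^c}g\le\|g\|_\infty\kappa_D$ at the level of uniform integrability rather than channeling it through $u^\lambda_\one$ as you do. Both arguments are sound.
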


\begin{proof}
By \eqref{e.dvl1}, 
\begin{align}
    v_\one^\lambda(x) & = \int_0^\infty e^{-\lambda t}K(t)\kappa_D(x)\dx t\nonumber\\
    & = \sum_{n=0}^\infty \int_0^\infty e^{-\lambda t}K_n(t)\kappa_D(x)\dx t.\label{eq.sum}
\end{align}
As $K_0 = P_D$, we have $\int_0^\infty e^{-\lambda t} K_0(t) \kappa_D(x)\dx t = u_\one^\lambda(x)$ by \eqref{e.dul1}.
Since $Y$ is c\`adl\`ag, 
\begin{equation}\label{e.u1l1}
u^\lambda_\one(x)=\ee^x e^{-\lambda\tau_D}<1, \quad x\in D.
\end{equation} 
By \cite[Lemma 2.5]{Bogdan2023}, $u^\lambda_\one$ is continuous.
Thus, there is $\ve>0$, depending on $\lambda$, such that $\mu u^\lambda_\one\le 1-\ve$ on $D^c$.
Indeed, with $H$ and $\vartheta$ as in Hypothesis \ref{hyp1}, the continuity of  $u^\lambda_\one$ gives an $\ve>0$ such that
$u^\lambda_\one < 1-\ve/\vartheta$ on $H$. It follows that

\begin{align}\nonumber
\mu u^\lambda_\one(z)&=\int_D u^\lambda_\one(w)\mu(z,\dw)
=1-\int_D (1-u^\lambda_\one(w))\mu(z,\dw)\\
&\le 1- \int_H \ve/\vartheta\mu(z,\dw)= 1-\ve, \quad z\in D^c.\label{e.u1le}
\end{align}

To deal with the other summands in \eqref{eq.sum}, we use that, by \eqref{e.Df2},
\[
K_{n}(t) = \int_0^t K_{n-1}(s)\nu\one_{D^c}\mu P_D(t-s)\dx s.
\]
Then,
\begin{align*}
\phantom{v_\one^\lambda} & \phantom{=} 
\int_0^\infty e^{-\lambda t} K_n(t)\kappa_D\dt \\
& =\int_0^\infty e^{-\lambda t} \int_0^t K_{n-1}(s) \nu\one_{D^c}\mu P_D(t-s)\kappa_D\ds\dt \\
&=\int_0^\infty e^{-\lambda s} K_{n-1}(s)\nu\one_{D^c}\mu\int_s^\infty  e^{-\lambda(t-s)}P_D(t-s)\kappa_D\dt \ds\\
&=\int_0^\infty e^{-\lambda s} K_{n-1}(s)\nu\one_{D^c}\mu u^\lambda_\one \ds\\
& \leq (1-\ve)\int_0^\infty e^{-\lambda s} K_{n-1}(s)\kappa_D  \ds\le (1-\ve)^n,
\end{align*}
where we used \eqref{e.dul1}, \eqref{e.u1le}, and induction.
At this point, \eqref{eq.sum} yields
$v^\lambda_\one< \sum_{n=0}^\infty (1-\ve)^n=1/\ve<\infty$. By majorization, $v_g^\lambda$ is bounded, too. 

To prove that $v^\lambda_g$ is continuous, first note that $D\ni x\mapsto p_D(t,x,y)$ is continuous for all $t>0$, $y\in D$. 
Since $u^0_\one\equiv 1$ is continuous, 
Vitali's convergence theorem (see, e.g., \cite[Chapter 22]{MR3644418}) 
yields uniform integrability of the integrand in \eqref{e.fug} for $g=\one$ and $\lambda=0$. 
As $\mu v^\lambda_g$ is bounded, by majorization, the integrand of \eqref{e.rev} is uniformly integrable, too, and the continuity of $v_g^\lambda$ follows from Vitali's theorem. 
\end{proof}

For $r>0$, we denote
\begin{align*}
\Pi_r:=\{x\in D^c:\ \delta_D(x)\le r\}.
\end{align*}
\begin{lem}
    \label{l.eshgc}
For $\lambda>0$, $v^\lambda_{\one_{\Pi_r}}\to 0$  as $r\to 0$, locally uniformly on $D$. 
\end{lem}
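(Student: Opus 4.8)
The plan is to exploit the monotonicity of $r\mapsto v^\lambda_{\one_{\Pi_r}}$ together with Dini's theorem, so that the desired locally uniform convergence reduces to a pointwise statement. First I would record the geometry: as $r\downarrow 0$ the shells shrink to the boundary, $\bigcap_{r>0}\Pi_r=\{z\in D^c:\delta_D(z)=0\}=\partial D$, and since $D$ is a bounded Lipschitz set, $\partial D$ is Lebesgue-null. Consequently $r\mapsto \one_{\Pi_r}$ is nondecreasing, and because $g\mapsto v^\lambda_g$ is monotone (it is an integral against the nonnegative kernels $K(t)$ and $\nu\one_{D^c}$), the family $\{v^\lambda_{\one_{\Pi_r}}\}_{r>0}$ decreases as $r\downarrow 0$.

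Next I would identify the pointwise limit as $0$. Using the representation \eqref{e.dvlg}, $v^\lambda_{\one_{\Pi_r}}(x)=\int_0^\infty e^{-\lambda t}K(t)[\nu\one_{D^c}\one_{\Pi_r}](x)\dt$, and for each fixed $v\in D$ the inner integrand satisfies $\nu\one_{D^c}\one_{\Pi_r}(v)=\int_{\Pi_r}\nu(v,z)\dz\downarrow\int_{\partial D}\nu(v,z)\dz=0$, the last equality holding because $\partial D$ is Lebesgue-null while $\nu(v,\cdot)$ is integrable over $D^c$ (its total mass being $\kappa_D(v)<\infty$). Since $0\le \nu\one_{D^c}\one_{\Pi_r}\le \kappa_D$ and, by Lemma \ref{l.h1f}, the majorant integrates to $\int_0^\infty e^{-\lambda t}K(t)\kappa_D(x)\dt=v^\lambda_\one(x)<\infty$, monotone convergence (with a finite first term) yields $v^\lambda_{\one_{\Pi_r}}(x)\downarrow 0$ for every $x\in D$.

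Finally I would upgrade pointwise to locally uniform convergence via Dini's theorem. By Lemma \ref{l.h1f} each $v^\lambda_{\one_{\Pi_r}}$ is continuous on $D$ (as $\one_{\Pi_r}$ is bounded), the family is monotone decreasing to the continuous limit $0$, so on every compact $C\subset D$ the convergence is uniform; the passage from a sequence $r_n\downarrow 0$ to the continuous parameter is immediate by monotonicity, giving a single tail bound valid for all small $r$ on $C$.

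I expect the only genuinely substantive point to be the identification of the pointwise limit, namely that the boundary contributes nothing to $\nu\one_{D^c}$; this rests entirely on $\partial D$ being Lebesgue-null, which is exactly where the Lipschitz assumption enters. It is worth noting that in this analytic formulation the fact that the exit distribution places no mass on $\partial D$ is automatic, since $\nu\one_{D^c}$ annihilates any Lebesgue-null set; one need not invoke the finer probabilistic statement $\pp^x(Y_{\tau_D}\in\partial D)=0$. Everything else—monotonicity, domination by $v^\lambda_\one$, and Dini—is routine bookkeeping.
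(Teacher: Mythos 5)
Your proof is correct and follows essentially the same route as the paper, which compresses the argument into one line: domination $v^\lambda_{\one_{\Pi_r}}\le v^\lambda_\one$ with finiteness and continuity from Lemma~\ref{l.h1f}, dominated (equivalently, monotone) convergence for the pointwise limit, and Dini's theorem for local uniformity. Your expansion supplies exactly the details the paper leaves implicit—monotonicity in $r$, the fact that $\bigcap_{r>0}\Pi_r=\partial D$ is Lebesgue-null for a Lipschitz set, and the passage from sequences to the continuous parameter—so there is nothing to correct.
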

\begin{proof}
    As $v^\lambda_{\one_{\Pi_r}}\le v^\lambda_\one$, dominated convergence, Lemma~\ref{l.h1f}, 
    and Dini's theorem yield $v^\lambda_{\one_{\Pi_r}}\to 0$  uniformly on compact subsets of $D$.
\end{proof}

\begin{proof}[Proof of {\rm Theorem~\ref{t.ef}}]
Pick an increasing sequence $D_n$ of compact subsets of $D$ with $\bigcup D_n = D$. By Lemma \ref{l.eshgc},
we may pick $r_n>0$ such that $v^\lambda_{\one_{\Pi_{r_n}}} \leq 2^{-n}$ on $D_n$. We may also assume
that the sequence $r_n$ is decreasing. Putting $g:=\sum_{n=1}^\infty \one_{\Pi_{r_n}}$, we get from
monotone convergence that $v_g^\lambda = \sum_{n=1}^\infty v_{\one_{\Pi_{r_n}}}^\lambda$. By Lemma \ref{l.h1f}, the summands
of the latter sum are finite and continuous. By construction, the sum converges uniformly on compact subsets of $D$, 
whence $v_g^\lambda$ itself is a finite and continuous function on $D$. 
Recall that $v^\lambda_g\ge u^\lambda_g$.
By regularity of $D$ for the Dirichlet problem, for every $r>0$, we have 
$$\liminf_{D\ni x\to \partial D} u^\lambda_{\one_{\Pi_r}}(x)=1,$$
see, e.g., \cite[Lemma 2.5]{Bogdan2023}.
So, $v^\lambda_g(x)\to \infty$ as $D\ni x\to \partial D$.
By \eqref{e.dvlg},
\begin{align*}
e^{-\lambda t}K(t) v^\lambda_g&=\int_0^\infty e^{-\lambda (s+t)} K(t+s) \nu\one_{D^c} g\ds=\int_t^\infty e^{-\lambda s} K(s) \nu\one_{D^c} g\ds\le v^\lambda_g
\end{align*}
for every $t\ge 0$, hence $v^\lambda_g$ is $\lambda$-supermedian.  
Let $x\in D$ and $0\le \phi \le 1$ be a continuous function compactly supported in $D$ with $\phi(x)=1$. It follows that
\[
e^{-\lambda t} K(t) (\phi\cdot v_g^\lambda)(x) \leq e^{-\lambda t}K(t)v_g^\lambda(x) \leq v_g^\lambda(x).
\]
Since $\phi\cdot v_g^\lambda \in C_c(D)$ and $K$ is a $C_b$-Feller semigroup,
upon letting $t\to 0$,
the left-hand side converges to $\phi(x)v_g^\lambda(x) = v_g^\lambda(x)$, which is the term on the right-hand side, proving
that $e^{-\lambda t}K(t)v_g^\lambda(x) \to v_g^\lambda(x)$. As
$x$ was arbitrary, it follows that $v_g^\lambda$ is indeed $\lambda$-excessive.
\end{proof}

\begin{rem}    
Note that Theorem~\ref{t.ef} does not generalize to the Brownian motion. Indeed, suppose that in that situation
we are given a function $v\colon D\to (0,\infty)$ which is $\lambda$-excessive for the corresponding semigroup $K$ and such that $v(x) \to \infty$ as $\delta_D(x)\to 0$. Let $D_n$, $n\in \N$, be a sequence of precompact open sets increasing
to $D$ and set 
\[
M_n\coloneqq \inf \{v(x) : x\in D\setminus D_n\}. 
\]
By our assumption on $v$, $M_n\to \infty$ as $n\to \infty$.
By domination, $v$ would also be $\lambda$-supermedian for the transition 
semigroup of the Brownian motion killed outside $D$. 
At this point, \cite[Chapter II, Proposition 2.8]{MR0264757} yields for $x\in D$, 
    \begin{align*}
    v(x)&\ge \ee^x e^{-\lambda \tau_{D_n}} v(Y_{\tau_{D_n}})
    \ge M_n \ee^x e^{-\lambda \tau_{D}} \to \infty,
    \end{align*}
    as $n\to \infty$, a contradiction.    Here, of course, $Y$ denotes the Brownian motion. Notably, the second inequality above does not hold for the $\alpha$-stable process.
  \end{rem}

Given an supermedian/excessive function for $K$, we can construct a supermedian/excessive function for
the semigroup $\K$ on the ladder space $\D$.

\begin{lem}\label{l.hhat}
Let $\lambda \geq 0$ and $\alpha\in (0,1]$. If $h$ is $\lambda$-supermedian (excessive) for $K$ then the function
$\mathbb{h}(m,x) \coloneqq \alpha^mh(x)$ is $\lambda$-supermedian (excessive) for $\K$.
\end{lem}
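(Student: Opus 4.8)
The plan is to verify the definitions directly using the explicit form of the lifted semigroup $\K$ given in \eqref{e.dKKt}. Recall that for $\f \in B_b(\D)$ with components $f_n(x) = \f(n,x)$, we have $\K(t)\f(m,x) = \sum_{n=m}^\infty K_{n-m}(t)f_n(x)$. So the first step is to apply this formula to the specific function $\mathbb{h}(m,x) = \alpha^m h(x)$, whose components are $\mathbb{h}_n(x) = \alpha^n h(x)$. This gives
\[
\K(t)\mathbb{h}(m,x) = \sum_{n=m}^\infty \alpha^n K_{n-m}(t)h(x) = \alpha^m \sum_{j=0}^\infty \alpha^{j} K_{j}(t)h(x),
\]
where I have reindexed with $j = n-m$. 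The key observation is that $\sum_{j=0}^\infty \alpha^j K_j(t)$ is a sub-probabilistic version of $K(t) = \sum_{j=0}^\infty K_j(t)$, obtained by inserting the damping factor $\alpha^j \le 1$.

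The second step is to compare this damped sum against $K(t)h$ itself. Since $\alpha \in (0,1]$ and each $K_j(t)$ is a positive operator, and $h \ge 0$, termwise domination $\alpha^j K_j(t)h \le K_j(t)h$ gives
\[
\K(t)\mathbb{h}(m,x) = \alpha^m \sum_{j=0}^\infty \alpha^j K_j(t)h(x) \le \alpha^m \sum_{j=0}^\infty K_j(t)h(x) = \alpha^m K(t)h(x).
\]
Combining this with the supermedian hypothesis $e^{-\lambda t}K(t)h \le h$, I multiply through by $\alpha^m$ and $e^{-\lambda t}$ to conclude
\[
e^{-\lambda t}\K(t)\mathbb{h}(m,x) \le \alpha^m e^{-\lambda t}K(t)h(x) \le \alpha^m h(x) = \mathbb{h}(m,x),
\]
which is exactly the $\lambda$-supermedian property for $\mathbb{h}$. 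The convergence of all the series involved is guaranteed by Lemma \ref{l.fKKf}, so there is no issue passing sums through.

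For the excessive case I need, additionally, that $e^{-\lambda t}\K(t)\mathbb{h} \to \mathbb{h}$ pointwise as $t \to 0$. Here the main subtlety is that the clean domination above is \emph{not} enough to recover the limit, because for the supermedian part I threw away the damping factor. Instead I would isolate the $j=0$ term, which is $K_0(t)h = P_D(t)h$, and bound the tail $\sum_{j\ge 1}\alpha^j K_j(t)h \le \sum_{j \ge 1} K_j(t)h = K(t)h - P_D(t)h$. As $t \to 0$, the estimate $\pp^x(\tau_D \le t) \to 0$ (used already in the proof of Proposition \ref{p.semigroup_ladderspace}, via \eqref{e.ebteet}) forces $K(t)h - P_D(t)h \to 0$ pointwise, since $K(t)h - P_D(t)h = \sum_{j\ge 1}K_j(t)h$ is controlled by $\|h\|\,\pp^x(\tau_D \le t)$ when $h$ is bounded, while for unbounded $h$ one uses the $\lambda$-excessiveness of $h$ together with monotone/dominated convergence. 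This leaves only the $j=0$ term, and $e^{-\lambda t}P_D(t)h \to h$ because $e^{-\lambda t}K(t)h \to h$ (the excessive hypothesis on $h$) and the difference $K(t)h - P_D(t)h$ vanishes. Multiplying by the fixed factor $\alpha^m$ then gives $e^{-\lambda t}\K(t)\mathbb{h}(m,\cdot) \to \alpha^m h = \mathbb{h}(m,\cdot)$. I expect this limiting step for the excessive case to be the main (though still modest) obstacle, since it requires carefully separating the undamped leading term from the damped tail rather than relying on the crude domination that suffices for the supermedian inequality.
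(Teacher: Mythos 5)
Your proof of the supermedian half is exactly the paper's argument: expand \eqref{e.dKKt}, use $\alpha^{n}\leq\alpha^{m}$ termwise, resum to $\alpha^{m}K(t)h(x)$, and apply the hypothesis. One cosmetic point: Lemma~\ref{l.fKKf} is stated for \emph{bounded} $\f$, while $\mathbb{h}$ is unbounded whenever $h$ is; but since all terms are nonnegative, the series manipulations are justified by Tonelli in any case.

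The excessive half is where you diverge from the paper, and where there is a genuine gap. Your route needs $\sum_{j\geq 1}\alpha^{j}K_{j}(t)h\to 0$ pointwise, equivalently that $h$ is $\lambda$-excessive for $P_D$, i.e.\ $e^{-\lambda t}P_D(t)h\to h$. For \emph{bounded} $h$ your estimate $\sum_{j\geq1}K_j(t)h\leq\|h\|_\infty\,\pp^x(\tau_D\leq t)$ (as in \eqref{e.ebteet}) closes this, and that part of your argument is complete. But for unbounded $h$ the phrase ``monotone/dominated convergence'' hides the entire difficulty. From the hypotheses you only know that $h$ is $\lambda$-supermedian for $P_D$, so $e^{-\lambda t}P_D(t)h$ increases, as $t\downarrow 0$, to some $\hat{h}\leq h$, and the tail $e^{-\lambda t}\sum_{j\geq1}K_j(t)h$ then converges to $h-\hat{h}$; proving $\hat{h}=h$ is precisely the issue, not a routine limit interchange. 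Dominated convergence would require a dominating bound such as $e^{\lambda t}\int_0^t P_D(s)\numu h(x)\dx s$, and the finiteness of $\int_0^{t_0}P_D(s)\numu h(x)\dx s$ for some $t_0>0$ does not follow from the supermedian inequality, because in the exact identity $\sum_{j\geq1}K_j(t)h=\int_0^t P_D(s)\numu K(t-s)h\dx s$ the factor $K(t-s)h$ admits no lower bound of the form $c\,h$. So the unbounded case — which is exactly the case of the function $v$ of Theorem~\ref{t.ef} — is unproven as written.

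The paper sidesteps the tail entirely with a cutoff sandwich: take continuous $\phi$ with $0\leq\phi\leq1$, compactly supported in $D$, $\phi(x)=1$; then $e^{-\lambda t}\K(t)\mathbb{h}(m,x)\geq e^{-\lambda t}\K(t)(\phi\cdot\mathbb{h})(m,x)\geq \alpha^{m}e^{-\lambda t}P_D(t)(\phi h)(x)$, keeping only the $n=m$ term, and the right-hand side tends to $\alpha^{m}h(x)$ (immediate from the $C_b$-Feller property of $P_D$ when $h$ is continuous, as are all functions to which the lemma is applied); combined with the supermedian upper bound already proved, this gives excessiveness with no tail estimate at all. The same device rescues your argument: apply your decomposition to $\phi h$, which \emph{is} bounded, so your own estimate applies. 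Note also that in the paper's application the unbounded $h=v$ enters only with $\alpha=1$, where \eqref{e.dKKt} gives $\K(t)\mathbb{h}(m,\cdot)=K(t)h$ exactly and excessiveness is immediate, while $\alpha<1$ is used only with $h=\one$; so your proof covers the paper's uses, but for the lemma as stated the cutoff is the missing idea.
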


\begin{proof}
    Let $x\in D$, $m\in \N_0$. To prove that $\mathbb{h}$ is $\lambda$-supermedian, we note that
    \begin{align*}
     	\K(t)\mathbb{h}(m,x) & = \sum_{n=m}^\infty\alpha^n K_{n-m}(t)h(x) \leq \alpha^m\sum_{n=m}^\infty K_{n-m}(t)h(x)\\
     	 & = \alpha^m K(t)h(x) \leq \alpha^m e^{\lambda t}h(x) = e^{\lambda t}\mathbb{h}(m,x).
	\end{align*}
    If $h$ is even $\lambda$-excessive for $K$ then we let $\phi$ be a function compactly supported in $D$ with $0\leq \phi\leq 1$ and $\phi(x) = 1$, and we get
    \[
    \mathbb{h}(m,x) = \lim_{t\to 0} e^{-\lambda t} \alpha^n P_D(t)(\phi\cdot h)(x) \leq \lim_{t\to 0} e^{-\lambda t}\K(t)(\phi\cdot \mathbb{h})(m,x)
    \leq \mathbb{h}(m,x).
    \]
    This ends the proof.
\end{proof}

\section{The Hunt process}\label{s.hp}
Recall that a Hunt process is a c\`adl\`ag quasi-left continuous strong Markov process with a right continuous filtration, see, e.g., \cite[IV.7]{MR850715}. Here is the main result of this section.

\begin{thm}\label{t.existence}
There is a Hunt process $\X\coloneqq(\X_t, t\geq 0)$ on $\D$ with transition semigroup $\K$ and infinite lifetime.
\end{thm}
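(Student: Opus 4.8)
The plan is to obtain $\X$ from the potential-theoretic machinery relating sub-Markovian semigroups to Hunt processes, in the form of the criterion of Bliedtner and Hansen \cite{MR850715} as packaged in \cite[Corollary 9.5]{hansen2023positive}. Its input is a strongly Feller, conservative sub-Markovian semigroup on a locally compact space with countable base, together with two strictly positive continuous $\lambda$-excessive functions whose ratio tends to $0$ as one leaves every compact subset of the state space; its output is an associated Hunt process with infinite lifetime. Here the state space is the ladder space $\D=\N_0\times D$, which is locally compact with countable base, its compact sets being the finite unions $\{0,\dots,M\}\times C$ with $C\subset D$ compact. Consequently a point escapes to infinity in $\D$ precisely when $m\to\infty$ or $\delta_D(x)\to0$, and these are the two directions that the excessive functions must distinguish.

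First I would check that the analytic hypotheses are already available. Proposition~\ref{p.semigroup_ladderspace} shows that $\K$ is a $C_b$-Feller and strongly Feller semigroup, and \eqref{e.consbK} gives conservativeness, $\K(t,\x,\D)=1$; the latter is what will ultimately force the cemetery state to be unreachable and the lifetime to be infinite.

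The core step is to produce the two excessive functions on $\D$ with different asymptotics, which is exactly what Section~\ref{s.ef} was designed to supply. For the function blowing up at the finite boundary I would take the continuous, positive, $\lambda$-excessive function $v$ of Theorem~\ref{t.ef}, with $v(x)\to\infty$ as $\delta_D(x)\to0$, and lift it by Lemma~\ref{l.hhat} (with its parameter set to $1$) to the $\lambda$-excessive function $\mathbb{v}(m,x)=v(x)$ on $\D$. For the bounded reference function I would use that the constant $\one$ is $\lambda$-excessive for $K$, since $e^{-\lambda t}K(t)\one=e^{-\lambda t}\one$ by conservativeness \eqref{e.cons}, and lift it by Lemma~\ref{l.hhat} with a fixed parameter $a\in(0,1)$ to the bounded $\lambda$-excessive function $\mathbb{h}(m,x)=a^m$. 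The ratio $\mathbb{h}(m,x)/\mathbb{v}(m,x)=a^m/v(x)$ then tends to $0$ in \emph{both} directions of escape to infinity in $\D$: as $\delta_D(x)\to0$ because $v\to\infty$, and as $m\to\infty$ because $a^m\to0$. Feeding $\K$, $\mathbb{h}$, and $\mathbb{v}$ into \cite[Corollary 9.5]{hansen2023positive} then yields the Hunt process $\X$ with transition semigroup $\K$ and, the excessive functions precluding any escape to infinity, conservativeness gives the infinite lifetime.

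I expect the main obstacle to be the very point the two functions address: ensuring that $\X$ does not leave $\D$ in finite time and is a genuine Hunt process (c\`adl\`ag and quasi-left continuous) rather than merely a right process. The blow-up of $\mathbb{v}$ at each $\{m\}\times\partial D$ renders $\partial D$ inaccessible, so the process remains in the open set $D$ between jumps and its left limits stay in $D$ because the stable process exits $D$ by a jump — in contrast to the Brownian case, where the left limits land in $\partial D$ and the Hunt property fails. Meanwhile the decay of $\mathbb{h}$, which mirrors the geometric bound $K_n(t)\one\le c\gamma^n$ underlying Lemma~\ref{l.fKKf}, controls the reflection-count direction $m\to\infty$ and rules out infinitely many reflections in finite time. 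The delicate part is to verify that the hypotheses of \cite[Corollary 9.5]{hansen2023positive} hold verbatim in the product topology of $\D$, and in particular that these two functions separate the points added at infinity, so that the associated compactification introduces only a single cemetery point; this is precisely what the distinct asymptotics guarantee.
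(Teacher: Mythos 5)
Your proposal is correct and follows essentially the same route as the paper's proof: it invokes \cite[Corollary 9.5]{hansen2023positive} with the strong Feller and $C_b$-Feller properties from Proposition~\ref{p.semigroup_ladderspace}, and builds the two supermedian functions exactly as the paper does, lifting $v$ from Theorem~\ref{t.ef} with parameter $1$ and the constant $\one$ with a parameter in $(0,1)$ via Lemma~\ref{l.hhat}, so that the ratio $a^m/v(x)$ vanishes in both directions of escape. The conclusion of infinite lifetime from conservativeness (via \cite[Remark IV.7.2.1]{MR850715}) also matches the paper.
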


\begin{proof}
    Existence of this process follows from \cite[Corollary 9.5]{hansen2023positive} (see also \cite[IV.7.4 and IV.7.5]{MR850715}). Indeed, the semigroup $\K$ is right continuous in the sense of \cite[Corollary 9.5]{hansen2023positive} because it is
    $C_b$-Feller. Then, the resolvent of the transition semigroup is strongly Feller as the semigroup
    itself is strongly Feller, see Proposition \ref{p.semigroup_ladderspace}. 
    Finally, for  $\lambda>0$,  there exist strictly positive $\lambda$-supermedian functions
    $\u,\v$ such that $\u/\v\in C_0(\D)$, the space of continuous functions vanishing at infinity. Note that in our situation
    this means $\u(m,x)/\v(m,x) \to 0$ if $m\to\infty$ or $x\to\partial D$. 
    Given $\alpha \in (0,1]$ and an $\lambda$-supermedian function $h$ for $K$, denote the function $\mathbb{h}$ from Lemma \ref{l.hhat} by
    $\mathbb{h}[h,\alpha]$. Now choose $\u\coloneqq\mathbb{h}[\one,\alpha]$ with $\alpha \in (0,1)$ and $\v\coloneqq \mathbb{h}[v,1]$ with the $\lambda$-supermedian function $v$ from Theorem \ref{t.ef}. By Lemma~\ref{l.hhat}, $\u$ and $\v$ are $\lambda$-supermedian.
    Writing $v_\mathrm{min}\coloneqq \min\{v(x) : x \in D\} > 0$, we get
    \[
    \frac{\u (m,x)}{\v(m,x)} = \frac{\alpha^m}{v(x)} \leq \min\Big\{\frac{\alpha^m}{v_\mathrm{min}}, \frac{1}{v(x)}\Big\} \to 0,
    \]
    as $m\to \infty$ or $x\to \partial D$.
Since $\K$ is Markovian, 
the lifetime of $\X$ is infinite, see \cite[Remark IV.7.2.1]{MR850715}.
\end{proof}

In what follows, we write $\Pb^{(m,x)}$ for the distribution of the process $\X$ starting at $(m,x)\in \D$.
We are particularly interested in the case where $m=0$. Then, we write $\Pb^x$ as a shorthand for $\Pb^{(0,x)}$.

\begin{defn}
    The processes $N\coloneqq (N_t, t\geq 0)$ and $X\coloneqq (X_t, t\geq 0)$ are defined as the components of $\X$. 
    Thus $\X_t = (N_t, X_t)$ for all $t\geq 0$.
\end{defn}

\begin{thm}\label{t.components}
$\,$
\begin{enumerate}
[\upshape (a)]
    \item The process $N$ is almost surely increasing, integer-valued and c\`adl\`ag.
    \item The process $X$ is a Hunt process with transition semigroup $K$.
\end{enumerate}
\end{thm}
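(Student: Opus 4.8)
The plan is to read off both statements from the structure of the ladder-space process $\X = (N, X)$ whose existence and properties are guaranteed by Theorem \ref{t.existence}. For part (a), the key observation is that the transition kernel $\k(t, (m,x),(n,y)) = k_{n-m}(t,x,y)$ vanishes whenever $n < m$. This means that, starting from $(m,x)$, the process $\X$ can only move to states $(n,y)$ with $n \ge m$; hence $N$ is almost surely nondecreasing. More precisely, I would argue that for any $s < t$ the transition probability assigns zero mass to $\{n < N_s\}$, so $\Pb^{(m,x)}(N_t \ge N_s \text{ for all rational } s<t) = 1$, and then extend to all $s,t$ using the c\`adl\`ag property of $\X$ inherited from Theorem \ref{t.existence}. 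Since $N$ takes values in $\N_0$ (a discrete space) and is the first component of the c\`adl\`ag process $\X$, it is itself integer-valued and c\`adl\`ag. Monotonicity then forces $N$ to be a pure jump process that increases by integer amounts.

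For part (b), the main point is that $X$ is the spatial marginal of $\X$, and I must verify that it is Markov with the correct semigroup $K$. The identity \eqref{eq.Xdistribution2} is exactly what is needed: for $f \in B_b(D)$, lifting $f$ to $\f(m,x) = f(x)$ gives $\K(t)\f(m,x) = K(t)f(x)$, independently of $m$. The plan is to use this to show that for the filtration $(\cF_t)$ of $\X$,
\[
\ee^{(m,x)}\big[f(X_{t+s}) \mid \cF_t\big] = \K(s)\f(\X_t) = K(s)f(X_t)
\]
almost surely, where the first equality is the Markov property of $\X$ and the second is \eqref{eq.Xdistribution2}. Since the right-hand side depends only on $X_t$, this establishes that $X$ is Markov with transition semigroup $K$, and moreover the strong Markov property of $X$ follows from the strong Markov property of $\X$ by the same lifting applied at stopping times of the filtration $(\cF_t)$.

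It remains to upgrade $X$ to a Hunt process. The c\`adl\`ag property of $X$ is immediate from that of $\X$, since the projection $\D \to D$ is continuous and $X$ is the second component. Right-continuity of the filtration transfers directly. The delicate property is quasi-left continuity: I would deduce it from the quasi-left continuity of $\X$. Given an increasing sequence of stopping times $T_n \uparrow T$, quasi-left continuity of $\X$ gives $\X_{T_n} \to \X_T$ almost surely on $\{T < \infty\}$, and projecting onto the second coordinate yields $X_{T_n} \to X_T$. The lifetime of $X$ is infinite because that of $\X$ is, as established in the proof of Theorem \ref{t.existence}.

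The step I expect to require the most care is the rigorous transfer of the strong Markov property and quasi-left continuity from $\X$ to $X$: one must check that the natural filtration of $X$ is contained in (or can be augmented to agree with, up to null sets) that of $\X$, so that stopping times and the conditioning in the strong Markov property are meaningful for $X$. In particular, because $N$ is determined by the reflection history rather than by $X$ alone, the filtration generated by $X$ may be strictly smaller than that of $\X$; one must verify that working with the larger filtration $(\cF_t)$ does no harm, i.e.\ that $X$ is still strong Markov with respect to $(\cF_t)$ and that quasi-left continuity—an almost-sure path property—is unaffected by the choice of filtration.
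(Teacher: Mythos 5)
Your proposal is correct and follows essentially the same route as the paper: part (a) via the vanishing of $\k(t,(m,x),(n,y))$ for $n<m$ (the paper makes this quantitative by computing $\Pb^{(m,x)}(N_s\leq N_t)=1$ directly from \eqref{e.defk} and \eqref{e.cons}), and part (b) via the lifting identity \eqref{eq.Xdistribution2} showing that $\expect^{(m,x)}f(X_t)=K(t)f(x)$ is independent of $m$. The only difference is that where you carry out the transfer of the (strong) Markov property and quasi-left continuity from $\X$ to $X$ by hand—including the correct observation that one should work with the larger filtration of $\X$—the paper outsources exactly this step to the projection result \cite[IV.7.7]{MR850715}, applied with $\psi(x)=(0,x)$ and $\varphi(m,x)=x$, to conclude that $X$ is a standard process, and then upgrades to a Hunt process using conservativeness of $K$ via \cite[Remark IV.7.2.1]{MR850715}.
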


\begin{proof}
Both processes are c\`adl\`ag as $\X$ is c\`adl\`ag. Moreover, $N$ has values in $\N_0$ and $X$ has values in $D$ by the definition of the state space.

    (a) Fix $0<s<t<\infty$. For $m\geq 1$,
    \begin{align*}
       \Pb^{(m,x)}(N_s\leq N_t)
       & =  \sum_{n=0}^\infty \sum_{l=n}^\infty \Pb^{(m,x)}(\X_s\in \{n\}\times D, 
        \X_t\in \{l\}\times D)\\
         & =  \sum_{n=m}^\infty\sum_{l = n}^\infty \int_D\int_D k_{n-m}(s, x,y) k_{l-n}(t-s, y,z) \dx y\dx z\\
         & = \sum_{n=m}^\infty \int_D\int_D k_{n-m}(s, x,y) k(t-s, y,z) \dx y\dx z\\
         & =  \int_D \int_D k(s, x, y) k(t-s, y, z) \dx y\dx z = 1,
    \end{align*}
    by \eqref{e.defk} and \eqref{e.cons}.\medskip

    (b) Given $f\in B_b(D)$, set $\f(m,x) = f(x)$ for $m\in \N_0$.
    Then, by \eqref{eq.Xdistribution2},
    \begin{align}
        \expect^{(m,x)}f(X_t) & = \expect^{(m,x)}\f(\X_t) = \K(t)\f(m,x) = K(t)f(x).\label{eq.Xdistribution}
    \end{align}
    In particular, the computation shows that the distribution of $X_t$ under $\Pb^{(m,x)}$ does not depend on $m\in \N_0$.
    We may thus invoke \cite[IV.7.7]{MR850715}, with $\psi(x) = (0,x)$ and the projection $\varphi(m,x) = x$ therein, to  conclude that $X$
    is a \textit{standard process}. It follows from \eqref{eq.Xdistribution} that its transition semigroup is $K$. As $K$ is
    conservative, $X$ is a Hunt process, see \cite[Remark IV.7.2.1]{MR850715}.
\end{proof}
Our next task is to express random events of interest by using the semigroup $\K$. In doing so, we will also use the strong Markov property of the Hunt process $\X$.
To this end, we consider the stopping times $\tau_n$, defined by
\[
\tau_n \coloneqq \inf\{t\ge 0 : N_t = N_0+n\}, \quad n\in \N_0.
\]
Here, as usual, $\inf \emptyset=\infty$, but we will subsequently prove that $\tau_n<\infty$ almost surely.
Of course, $\tau_0=0$. 
To simplify notation in what follows, we let $\tau:=\tau_1$. Note that, by the right-continuity of $N$, $N_{\tau_n}=N_0+n$.
\begin{lem}\label{l.il}
For every $x\in D$, $\Pb^x(\tau< \infty)=1$ and $\Pb^x(\tau_n\to \infty)=1$.    
\end{lem}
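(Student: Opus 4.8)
The plan is to express the first reflection time through the Dirichlet semigroup $P_D$, to identify its law with that of the exit time $\tau_D$, and then to combine the strong Markov property of $\X$ with the conservativeness of $\K$. For $\Pb^x(\tau<\infty)=1$, recall that $N$ starts at $N_0=0$ and, by Theorem~\ref{t.components}(a), is a.s.\ increasing, integer-valued and c\`adl\`ag; hence $\{\tau>t\}=\{N_t=0\}$. I would compute this probability directly from the semigroup: taking $\f=\one_{\{0\}\times D}$ in \eqref{e.dKKt} leaves only the $n=0$ term, so
\[
\Pb^x(\tau>t)=\Pb^{(0,x)}(N_t=0)=\K(t)\f(0,x)=K_0(t)\one(x)=P_D(t)\one(x)=\Pb^x(\tau_D>t),
\]
where the last two equalities use $K_0=P_D$ and \eqref{e.dPD2}. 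Since $u^0_\one\equiv 1$, i.e.\ $\Pb^x(\tau_D<\infty)=1$ (as recorded in the proof of Lemma~\ref{l.h1f}), letting $t\to\infty$ makes the right-hand side tend to $0$, whence $\Pb^x(\tau=\infty)=0$.

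Because the kernel $\k$ depends on the levels only through their difference, the same computation yields $\Pb^{(m,x)}(\tau>t)=P_D(t)\one(x)$ for every $m$, so the first reflection time always has the law of $\tau_D$, independently of the starting level. I would then prove $\Pb^x(\tau_n<\infty)=1$ for all $n$ by induction, the case $n=1$ being $\tau=\tau_1$. For the inductive step, $\tau_{n+1}-\tau_n$ is the first reflection time of the process shifted by $\tau_n$, so the strong Markov property of $\X$ at $\tau_n$ and the previous identity applied at $\X_{\tau_n}=(n,X_{\tau_n})$ give
\[
\Pb^x(\tau_{n+1}-\tau_n>t\mid \mathcal{F}_{\tau_n})=\Pb^{X_{\tau_n}}(\tau_D>t)\quad\text{on }\{\tau_n<\infty\}.
\]
Letting $t\to\infty$ forces $\Pb^x(\tau_{n+1}=\infty\mid\mathcal{F}_{\tau_n})=0$ on $\{\tau_n<\infty\}$, and the induction hypothesis $\Pb^x(\tau_n<\infty)=1$ closes the step.

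For $\Pb^x(\tau_n\to\infty)=1$, note that $(\tau_n)$ is nondecreasing, so $\tau_\infty:=\lim_n\tau_n$ exists in $[0,\infty]$, and from $\{N_t\ge n\}=\{\tau_n\le t\}$ we get $N_t=\#\{n\ge 1:\tau_n\le t\}$. If $\tau_\infty<\infty$ held on a set of positive probability, then for any $s>\tau_\infty$ every $\tau_n\le s$, forcing $N_s=\infty$ there. This is impossible, since $\X$ is $\D$-valued with infinite lifetime by Theorem~\ref{t.existence}---equivalently $\K$ is conservative, \eqref{e.consbK}---so that $N_s\in\N_0$ for all $s$ almost surely. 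Hence $\tau_\infty=\infty$ a.s.

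I expect the non-explosion step to be the main obstacle: it is the only place where conservativeness (the infinite lifetime of $\X$) is genuinely needed, and the care lies in passing from the a.s.\ finiteness of $N_s$ for each fixed $s$ to the almost sure divergence of $\tau_n$, using the pathwise monotonicity and integer values of $N$. The secondary technical point is the level-independence $\Pb^{(m,x)}(\tau>t)=\Pb^x(\tau_D>t)$, which is what allows the strong Markov induction to run uniformly over levels.
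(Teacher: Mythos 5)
Your proof is correct, and its first half coincides with the paper's: both reduce $\Pb^x(\tau=\infty)$ to $\Pb^x(N_t=0)=\k(t,(0,x),\{0\}\times D)=p_D(t,x,D)\to 0$ as $t\to\infty$; your extra observation that $\tau$ has exactly the law of $\tau_D$, uniformly in the starting level $m$, is a harmless refinement. For the second assertion you take a genuinely different route. The paper stays on the distributional level: writing $\{\tau_n>t\}=\{\X_t\in\{0,\dots,n-1\}\times D\}$, monotone convergence gives $\Pb^x(\tau_\infty>t)=\lim_n\k\bigl(t,(0,x),\{0,\dots,n-1\}\times D\bigr)=\k(t,(0,x),\D)=1$ for every $t>0$ by \eqref{e.consbK} — one computation, no stopping-time machinery and no strong Markov property. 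You instead argue pathwise by contradiction: on $\{\tau_\infty<s\}$ all $\tau_n\le s$, so monotonicity of $N$ and $N_{\tau_n}=N_0+n$ force $N_s=\infty$, contradicting that $\X_s\in\D=\N_0\times D$ by the infinite lifetime from Theorem \ref{t.existence}. Both arguments ultimately rest on conservativeness — the paper invokes \eqref{e.consbK} directly, you invoke the infinite lifetime it implies — and both use the same pathwise facts about $N$ that the paper records just before the lemma; note that of your stated identity $\{N_t\ge n\}=\{\tau_n\le t\}$ only the implication $\tau_n\le s\Rightarrow N_s\ge n$ is actually needed (and safe), since the converse inclusion would require that $N$ never skips a level, a point the paper likewise does not belabor. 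Finally, your strong-Markov induction showing $\Pb^x(\tau_n<\infty)=1$ for every $n$ is correct but not part of the lemma: the paper defers precisely this statement, with the same shift computation \eqref{eq.shifted}, to Corollary \ref{c.distributionsofxtau}, and your contradiction argument does not need it, because on $\{\tau_\infty<\infty\}$ all $\tau_n$ are automatically finite. The paper's computation buys brevity and independence from the strong Markov property; your version buys a more transparent pathwise picture of why non-explosion is exactly conservativeness.
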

\begin{proof}
By definition, $\{\tau=\infty\} \subset \{N_t = 0\}$ for every $t>0$. Thus
\[
\Pb^x(\tau = \infty) \leq \Pb^x(N_t=0) = \Pb^x(\X_t \in \{0\}\times D) = p_D(t, x, D) \to 0
\]
as $t\to \infty$, hence $\tau<\infty$ almost surely.
To prove the second statement, we let
$\tau_\infty \coloneqq \lim_{n\to\infty}\tau_n = \sup_n \tau_n$. By monotone convergence, for every $t>0$,
$\Pb^x(\tau_\infty > t) =\lim_{n\to\infty}
\Pb^x(\tau_n>t)$.
Then, it is enough to prove that $\lim_{n\to \infty}\Pb^x(\tau_n>t)=1$ for every $t>0$, and, indeed,
\begin{align*}
\Pb^x(\tau_n>t)&=\Pb^x(\X_t\in \{0,\ldots,n-1\}\times D)\\
&=\k(t,(0,x),\{0,\ldots,n-1\}\times D)\to 
\k(t,(0,x),\N_0\times D)=1,
\end{align*}
as $n\to \infty$, see \eqref{e.consbK}.
\end{proof}

 We want to study the process
$X$ before and at time $\tau$. 

\begin{prop}\label{p.before1ststop}
Let $x\in D$, $M\in \N$,
$t_0\coloneqq 0\leq t_1 < t_2 < \cdots <t_M<\infty$, and  $A_1, \ldots, A_M, B\subset D$. Denote
$s_k \coloneqq t_k -t_{k-1}$ for $0<k\le M$.
Then,
\begin{align}
\nonumber & \Pb^x(X_{t_1}\in A_1, \ldots, X_{t_M}\in A_M, \tau> t_M, X_\tau\in B)\\
 \label{e.dhm}= & \int_{A_1} p_D(s_1, x, \dx y_1)\int_{A_2}p_D(s_2, y_1, \dx y_2) \cdots \int_{A_M} p_D(s_M, y_{M-1}, \dx y_M)\\
 \nonumber& \quad \int_{0}^\infty \dx r \int_D\dx v \int_{D^c}\dx z \, p_D(r, y_{M}, v)\nu(v, z)\mu(z, B),
\end{align}
where, for $t_1=0$, we set $p_D(0, x, \cdot) \coloneqq \delta_x(\cdot)$, the Dirac measure at $x$.
\end{prop}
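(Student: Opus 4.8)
The plan is to strip off the trajectory up to the first reflection by exploiting that the counter $N$ stays at $0$ before $\tau$, and then to identify the law of the reflection position $X_\tau$ through the strong Markov property of $\X$ at $\tau$. Throughout, recall $\Pb^x=\Pb^{(0,x)}$.

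\emph{Before $\tau$.} Since $N_0=0$ and $N$ is nondecreasing and right-continuous, $\{\tau>t_M\}=\{N_{t_M}=0\}$, and on this event $N_{t_j}=0$, i.e.\ $\X_{t_j}=(0,X_{t_j})$, for all $j\le M$. Hence $\{X_{t_1}\in A_1,\dots,X_{t_M}\in A_M,\ \tau>t_M\}$ coincides with $\{\X_{t_j}\in\{0\}\times A_j\ \text{for all } j\}$. Writing out the finite-dimensional distribution of the Markov process $\X$ and using $\k(s,(0,x),(0,\dx y))=k_0(s,x,y)\dx y=p_D(s,x,y)\dx y$, the transitions within the $0$-slice are governed by $p_D$, so this probability equals the product of Dirichlet heat kernels on the first two lines of \eqref{e.dhm}.

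\emph{Reduction at $t_M$.} On $\{\tau>t_M\}$ we have $\X_{t_M}=(0,X_{t_M})$, and $\tau$ is then the first reflection time of the shifted process $(\X_{t_M+u})_{u\ge0}$, with $X_\tau$ its value there. As $\{\tau>t_M\}$ is $\mathcal{F}_{t_M}$-measurable, the Markov property gives $\Pb^{(0,x)}(X_\tau\in B\mid\mathcal{F}_{t_M})=g(X_{t_M})$ on $\{\tau>t_M\}$, where $g(y)\coloneqq\Pb^{(0,y)}(X_\tau\in B)$. Combining this with the previous paragraph, $\Pb^{(0,x)}(\dots)=\expect^{(0,x)}\big[\one_{\{\X_{t_j}\in\{0\}\times A_j\,\forall j\}}\,g(X_{t_M})\big]$ equals the claimed product of $p_D$'s with $g(y_M)$ inserted at the last node. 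It therefore remains to prove $g(y)=\int_0^\infty\dx r\int_D\dx v\int_{D^c}\dx z\,p_D(r,y,v)\nu(v,z)\mu(z,B)$.

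\emph{The reflection law.} Let $\rho(\dx s,\dx w)\coloneqq\Pb^{(0,y)}(\tau\in\dx s,\,X_\tau\in\dx w)$, a probability measure on $(0,\infty)\times D$ by Lemma \ref{l.il}. Applying the strong Markov property of the Hunt process $\X$ at $\tau$ and observing that $\{N_t=1\}=\{\tau\le t<\tau_2\}$ forces $\X$ to remain in the $1$-slice on $(\tau,t]$, where its transition density is again $p_D$, I would obtain $k_1(t,y,B)=\Pb^{(0,y)}(N_t=1,X_t\in B)=\expect^{(0,y)}[\one_{\{\tau\le t\}}p_D(t-\tau,X_\tau,B)]=\int_{(0,t]\times D}p_D(t-s,w,B)\,\rho(\dx s,\dx w)$. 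On the other hand, the recursion defining $k_1$ (cf.\ \eqref{e.Df2}, with $k_0=p_D$) gives $k_1(t,y,B)=\int_{(0,t]\times D}p_D(t-s,w,B)\,\beta(\dx s,\dx w)$, where $\beta(\dx s,\dx w)\coloneqq\big(\int_D p_D(s,y,v)\,\nu\one_{D^c}\mu(v,\dx w)\,\dx v\big)\,\dx s$ is again a probability measure. Once $\rho=\beta$ is established, fixing $B$ and integrating out $s$ yields $g(y)=\rho((0,\infty)\times B)=\int_0^\infty\int_D p_D(s,y,v)\,\nu\one_{D^c}\mu(v,B)\,\dx v\,\dx s$, which by the definition of $\nu\one_{D^c}\mu$ and the Green function \eqref{eq.green}, \eqref{eq:Pk} is exactly the desired triple integral; equivalently $g(y)=\ee^y[\mu(Y_{\tau_D},B)]$ via the Ikeda--Watanabe formula \eqref{eq:IW}. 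The degenerate case $t_1=0$ is covered by the convention $p_D(0,x,\cdot)=\delta_x$.

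\emph{Main obstacle.} The hard part is the deconvolution $\rho=\beta$: both measures reproduce $k_1(t,y,\cdot)$ after convolution against $p_D$, so I must show that $\eta\mapsto\big(t\mapsto\int_{(0,t]\times D}p_D(t-s,w,\cdot)\,\eta(\dx s,\dx w)\big)$ is injective on finite measures $\eta$. Applied to $\eta=\rho-\beta$ and read through a Laplace transform in $t$, this convolution becomes $\int_D R_\lambda(w,\cdot)\,\hat\eta^\lambda(\dx w)$, where $\hat\eta^\lambda(\dx w)\coloneqq\int_{(0,\infty)}e^{-\lambda s}\eta(\dx s,\dx w)$ and $R_\lambda$ is the $\lambda$-resolvent of the killed process $Y^D$. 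Thus the problem reduces to injectivity of $m\mapsto mR_\lambda$ on finite signed measures, which I would deduce from the resolvent identity together with $\mu R_\mu f\to f$ as $\mu\to\infty$ for $f\in C_b(D)$ — a consequence of $P_D$ being a $C_b$-Feller semigroup. Injectivity gives $\hat\eta^\lambda=0$ for every $\lambda>0$, and uniqueness of the Laplace transform in $s$ then forces $\eta=0$, i.e.\ $\rho=\beta$.
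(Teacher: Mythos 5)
Your proposal is correct, but it takes a genuinely different route from the paper's. The paper never invokes the strong Markov property at $\tau$ in this proposition: it approximates $\tau$ from the right by discretized times $\sigma_n$, rewrites $\{t_M+\tfrac{j}{n}<\tau\le t_M+\tfrac{j+1}{n}\}$ as $\{N_{t_M+j/n}=0,\,N_{t_M+(j+1)/n}\ge 1\}$, computes everything explicitly with the ladder kernel $\k$ and the Duhamel recursion, passes to the limit using c\`adl\`ag paths and $K(t)f\to f$, and finally identifies the two measures in $B$ via regularity (Billingsley). You instead apply the simple Markov property at the deterministic time $t_M$ (legitimate, since on $\{\tau>t_M\}=\{N_{t_M}=0\}$ one has $\tau=t_M+\tau\circ\Theta_{t_M}$), reducing the claim to computing $g(y)=\Pb^{(0,y)}(X_\tau\in B)$, and you pin down the joint law $\rho$ of $(\tau,X_\tau)$ by deconvolution: the strong Markov property at $\tau$ — available without circularity, since Theorem \ref{t.existence} precedes this proposition — represents $k_1(t,y,\cdot)$ as the convolution of $\rho$ against $p_D$, while the defining recursion represents it as the convolution of $\beta$ against $p_D$; Laplace transforming in $t$ and using injectivity of $m\mapsto mR_\lambda$ together with uniqueness of Laplace transforms gives $\rho=\beta$. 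Your injectivity argument is sound as sketched: note that injectivity for each \emph{fixed} $\lambda$ suffices (the resolvent identity propagates $\hat\eta^\lambda R_\lambda=0$ to $\hat\eta^\lambda R_\mu=0$ for all $\mu$, because $(\hat\eta^\lambda R_\lambda)R_\mu=0$), so the $\lambda$-dependence of $\hat\eta^\lambda$ is harmless; also the possible atom of $\rho$ at $s=t$ is washed out in the Laplace transform, and a posteriori $\rho=\beta$ has a density. What each approach buys: yours yields the stronger byproduct of the joint distribution of $(\tau,X_\tau)$ — an Ikeda--Watanabe-type formula for $X$ — and avoids the discretization and limit interchanges, at the cost of an extra uniqueness lemma and careful handling of the random shift; the paper's route stays purely at the kernel level and needs neither the strong Markov property at a random time nor the deconvolution step. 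Two caveats in your write-up are shared with the paper's own arguments and hence not gaps relative to it: the identifications $\{\tau>t\}=\{N_t=0\}$ and $\{N_t=1\}=\{\tau\le t<\tau_2\}$ implicitly use that $N$ a.s.\ increases by unit steps (exactly as in Lemma \ref{l.il} and the paper's proof), and the (universal) measurability of $y\mapsto g(y)$, needed to integrate it against the $p_D$'s, should be noted, though it is routine for a Hunt process.
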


\begin{proof}
Let
\[
\sigma_n \coloneqq \sum_{j=0}^\infty \Big(t_M + \frac{j+1}{n}\Big) \one_{\big(t_M+\frac{j}{n},\, t_M +\frac{j+1}{n}\big]}(\tau), \quad n\in \N.
\]
We have $\{\tau>t_M\}=\{\sigma_n>t_M\}$, $\tau\le \sigma_n< \tau+1/n$ on the set $\{\tau>t_M\}$, and $\sigma_n=0$ elsewhere, so $\sigma_n \to \one_{\{\tau>t_M\}}\tau$ almost surely. 
Since $X$ is c\`adl\`ag, for $f\in C_b(D)$ and $x\in D$, we get
\begin{align}
\nonumber&\lim_{n\to \infty}\expect^x \big[ f(X_{\sigma_n}); X_{t_1}\in A_1, \ldots, X_{t_M}\in A_M, \sigma_n > t_M\big]\\
&= \expect^x\big[f(X_\tau); X_{t_1}\in A_1, \ldots, X_{t_M}\in A_M, \tau> t_M\big].\label{e.bop}
\end{align}

Using that for $j\in \N_0$ and $n\in \N$,
\[
\Big\{ t_M +\frac{j}{n} < \tau \leq t_M+\frac{j+1}{n}\Big\} = \big\{ N_{t_M+\frac{j}{n}}=0, N_{t_M+\frac{j+1}{n}}\geq 1\big\}.
\]

It follows that for $f\in C_b(D)$, $x\in D$ and $n\in \N$,
\begin{align*}
&\phantom{=} \expect^x \big[ f(X_{\sigma_n}); X_{t_1}\in A_1, \ldots, X_{t_M}\in A_M, \sigma_n > t_M\big]\\
& = \expect^x \big[ f(X_{\sigma_n}); X_{t_1}\in A_1, \ldots, X_{t_M}\in A_M, \tau > t_M\big]\\
& =  \sum_{j=1}^\infty\expect^x \big[ f(X_{t_M +\frac{j+1}{n}}); X_{t_1}\in A_1, \ldots, X_{t_M}\in A_M, N_{t_M +\frac{j}{n}} = 0, N_{t_M+ \frac{j+1}{n}}\geq 1\big]\\
& =  \sum_{j=1}^\infty\sum_{m=1}^\infty \expect^x \big[ f(X_{t_M +\frac{j+1}{n}}); X_{t_1}\in A_1, \ldots, X_{t_M}\in A_M, N_{t_M +\frac{j}{n}} = 0, N_{t_M+ \frac{j+1}{n}}=m\big]\\
& =  \sum_{j=0}^\infty \sum_{m=1}^\infty\int_{A_1}p_D(s_1, x, \dx y_1) \int_{A_2} p_D(s_2, y_1, \dx y_2) \cdots \int_{A_M} p_D(s_M, y_{M-1}, \dx y_{M})\\
& \qquad \int_D\int_D p_D({\scriptstyle \frac{j}{n}}, y_M, \dx y)k_m({\scriptstyle \frac{1}{n}}, y, \dx w) f(w).
\end{align*}
Here we used that $ N_{t_M+\frac{j}{n}}=0$ implies $N_{t_1}=\ldots =N_{t_M} =0$ by Theorem \ref{t.components}(a), that
$\k(r, (0,x), (0,y)) = p_D(r, x, y)$ for $r>0$ and $x,y\in D$, and that $\k(r, (0,y), (m, w)) = k_m(r,y,w)$ for $r>0$, $m\in \N$, and $y,w\in D$.

Next, we note that
\begin{align*}
& \phantom{=}\sum_{j=0}^\infty \sum_{m=1}^\infty\int_D\int_D p_D({\scriptstyle \frac{j}{n}}, y_{M},\dx y) k_m({\scriptstyle \frac{1}{n}}, y, \dx w) f(w)\\
& =  \sum_{j=0}^\infty \sum_{m=1}^\infty \int_D p_D({\scriptstyle \frac{j}{n}}, y_{M}, \dx y )\int_0^\frac{1}{n}\dx s \int_D\dx v
\int_{D^c}\dx \zeta \,p_D(s, y, v)\nu(v,\zeta )\\
& \qquad \int_{D}\mu (\zeta, \dx z) \int_{D}\dx w k_{m-1}({\scriptstyle \frac{1}{n}} - s, z, w) f(w)\\
& =  \sum_{j=0}^\infty \int_0^\frac{1}{n} \dx s \int_D\dv \int_{D^c}\dx \zeta \, p_D({\scriptstyle \frac{j}{n}} +s, y_{M}, v)\nu(v,\zeta)\\
& \qquad\int_D\mu(\zeta, \dx z)\int_D \dx w k({\scriptstyle \frac{1}{n}}-s, z, w)f(w)\\
& =  \int_0^\infty \dx r \int_D\dx v \int_{D^c}\dx\zeta\, p_D(r, y_{M}, v)\nu(v, \zeta)F_n(\zeta,r),
\end{align*}
where
\[
F_n(\zeta,r) = \sum_{j=0}^\infty \one_{({\scriptstyle \frac{j}{n}, \frac{j+1}{n}}]}(r) \int_D\mu(\zeta, \dx z) K({\scriptstyle \frac{j+1}{n}}-r)f(z).
\]
Since $K(t)f(z) \to f(z)$ for $t\to 0$, we get
\[
F_n(\zeta,r) \to F(\zeta) := \int_D\mu(\zeta, \dx w)f(w)
\]
as $n\to \infty$ for all $r\in (0,\infty)$.  In view of \eqref{e.bop},
\begin{align}
& \expect^x \big[ f(X_\tau); X_{t_1}\in A_1, \ldots, X_{t_M}\in A_M, \tau > t_M\big]\notag\\
= & \int_{A_1} p_D(s_1, x, \dx y_1)\int_{A_2} p_D(s_2, y_1, \dx y_2)\cdots \int_{A_M}p_D(s_M, y_{M-1}, \dx y_{M})\label{e.eif}\\
& \qquad \int_0^\infty \dx r \int_D \dx v\int_{D^c}\dx \zeta\, p_D(r, y_{M}, v)\nu(v,\zeta)\int_D\mu(\zeta, \dx w)f(w).\notag
\end{align}
As $f\in C_b(D)$ was arbitrary, the result follows 
since both sides of \eqref{e.dhm} are finite, hence regular 
measures in $B$; see \cite[Theorem 1.2]{MR1700749}. Note that \cite[Theorem 1.2]{MR1700749} is actually formulated
for probability measures. However, choosing $f=\one$ in \eqref{e.eif}, yields equality of masses of the two measures, 
so we can renormalize.
\end{proof}

\begin{rem}
    \label{r.general}
    Proposition \ref{p.before1ststop} was formulated for the (initial) distributions of $\X_0$ in the form $\delta_0(\!\dx m)\otimes \delta_x(\!\dx x)$, but
    it also holds true for general initial distributions of the form $\delta_{m_0}(\!\dx m)\otimes\rho(\!\dx x)$, where
    $m_0\in \N_0$ and $\rho$ is a probability measure on $D$. Indeed, generalizing from $m_0=0$ to general $m_0$ is
    straightforward and to obtain general distributions $\rho$ of $X_0$, it suffices to integrate \eqref{e.dhm} over $x$ with
    respect to $\rho$.
\end{rem}

We next compute the distribution of the random variables $X_{\tau_n}$ for $n\in \N$. To that end, we use the harmonic kernel $h_D$,
see \eqref{eq.harmonic}, and write
\[
(h_D\mu)(x, A) \coloneqq \int_{D^c} h_D(x, \dx z)\mu(z, A),\quad x\in D,\; A\subset D.
\]
We then define $(h_D\mu)^{n+1}$ for $n\in \N$ by induction:
\[
(h_D\mu)^{n+1}(x, A) \coloneqq \int_{D}(h_D\mu)^n(x, \dx y)(h_D\mu)(y, A),\quad x\in D,\; A\subset D.
\]
We also let $(h_D\mu)^{0}(x,A)\coloneqq \delta_x(A)$, the the Dirac measure at $x$.

\begin{cor}\label{c.distributionsofxtau}
For all $n\in \N_0$ and $x\in D$, we have $\Pb^x(\tau_n< \infty)=1$ and
\(
\Pb^x(X_{\tau_n} \in B) = (h_D\mu)^n(x, B)\) for every $B\subset D$.
\end{cor}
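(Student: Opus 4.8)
The plan is to prove the corollary by induction on $n$, using Proposition~\ref{p.before1ststop} as the base case and the strong Markov property of $\X$ at the stopping time $\tau_n$ for the inductive step. The claim $\Pb^x(\tau_n<\infty)=1$ for all $n$ is already essentially contained in Lemma~\ref{l.il}, which gives $\Pb^x(\tau<\infty)=1$ and $\Pb^x(\tau_n\to\infty)=1$; since $\tau_n$ is increasing in $n$ and each increment corresponds to one further reflection, I would first record that the $\tau_n$ are finite almost surely (this also follows directly from the almost-sure finiteness of the distributions computed below, whose total mass will turn out to be $1$).

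For the distributional statement, the base case $n=0$ is trivial since $\tau_0=0$ and $(h_D\mu)^0(x,B)=\delta_x(B)=\Pb^x(X_0\in B)$. For $n=1$, I would specialize Proposition~\ref{p.before1ststop} by taking $M=1$, $t_1=0$, and $A_1=D$, so that all the intermediate space constraints become vacuous. The right-hand side of \eqref{e.dhm} then collapses to
\[
\int_0^\infty\dx r\int_D\dx v\int_{D^c}\dx z\, p_D(r,x,v)\nu(v,z)\mu(z,B),
\]
which by \eqref{eq:Pk} (the density formula for the harmonic kernel) equals exactly $\int_{D^c} h_D(x,\dx z)\mu(z,B)=(h_D\mu)(x,B)$. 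This identifies $\Pb^x(X_\tau\in B)=(h_D\mu)(x,B)$, and choosing $B=D$ shows the mass is $\int_{D^c}h_D(x,\dx z)\mu(z,D)=\int_{D^c}h_D(x,\dx z)=1$ (since $\mu(z,\cdot)$ is a probability measure and $h_D(x,\cdot)$ is the exit distribution of $Y$, which is a probability measure as $Y$ almost surely exits the bounded set $D$), confirming $\Pb^x(\tau_1<\infty)=1$.

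For the inductive step from $n$ to $n+1$, the key observation is that $\tau_{n+1}$ is the first time $N$ increases beyond $N_{\tau_n}=n$, so relative to the post-$\tau_n$ process, $\tau_{n+1}-\tau_n$ plays the role of the first reflection time $\tau$. By the strong Markov property of the Hunt process $\X$ at $\tau_n$, conditionally on $\X_{\tau_n}=(n,X_{\tau_n})$, the shifted process $(\X_{\tau_n+t})_{t\geq 0}$ is again an $\X$-process started at $(n,X_{\tau_n})$. Since by Remark~\ref{r.general} Proposition~\ref{p.before1ststop} holds for arbitrary starting level $m_0$ and arbitrary initial distribution $\rho$ of the spatial component, applying the $n=1$ identity to the post-$\tau_n$ process yields
\[
\Pb^x(X_{\tau_{n+1}}\in B\mid X_{\tau_n}=y)=(h_D\mu)(y,B).
\]
Integrating against the inductive hypothesis $\Pb^x(X_{\tau_n}\in\dx y)=(h_D\mu)^n(x,\dx y)$ gives
\[
\Pb^x(X_{\tau_{n+1}}\in B)=\int_D(h_D\mu)^n(x,\dx y)(h_D\mu)(y,B)=(h_D\mu)^{n+1}(x,B),
\]
which is exactly the defining recursion for $(h_D\mu)^{n+1}$. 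Taking $B=D$ again propagates the total mass $1$, so $\Pb^x(\tau_{n+1}<\infty)=1$.

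The main obstacle is the careful justification of the strong Markov reduction: one must verify that $\tau_n$ is indeed a stopping time for the (right-continuous) filtration of $\X$ and that, after shifting, the event $\{X_{\tau_{n+1}}\in B\}$ together with $\tau_{n+1}<\infty$ corresponds precisely to the ``first reflection'' event of the shifted process, so that Proposition~\ref{p.before1ststop} (via Remark~\ref{r.general}) applies with the correct initial level $m_0=n$. Because $N$ is almost surely increasing and integer-valued (Theorem~\ref{t.components}(a)), the increment $N_{\tau_n+t}-n$ starts at $0$ and the first time it reaches $1$ is exactly $\tau_{n+1}-\tau_n$; this matching, combined with the fact that the spatial transition law $\k$ depends only on the \emph{difference} of levels, is what makes the shifted process behave like a fresh copy started at level $n$. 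Once this identification is in place, the integration step is routine.
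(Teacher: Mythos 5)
Your proposal is correct and takes essentially the same route as the paper's proof: the case $n=1$ by specializing Proposition~\ref{p.before1ststop} to $M=1$, $t_1=0$, $A_1=D$ and identifying the resulting integral via \eqref{eq:Pk}, then induction via the strong Markov property at $\tau_n$, using exactly the identification $\tau\circ\Theta_{\tau_n}=\tau_{n+1}-\tau_n$ (the paper's \eqref{eq.shifted}) and the fact that $\k$ depends only on level differences. The only immaterial deviation is that you obtain $\Pb^x(\tau_{n+1}<\infty)=1$ by propagating total mass with $B=D$, whereas the paper reads it off directly from $\tau_{n+1}=\tau\circ\Theta_{\tau_n}+\tau_n$ together with the a.s.\ finiteness of $\tau$ from Lemma~\ref{l.il}.
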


\begin{proof}
The case $n=0$ is trivial.
The case $n=1$ follows from Lemma \ref{l.il} and Proposition \ref{p.before1ststop}, applied with $M=1$, $t_1=0$, and $A_1=D$; see \eqref{eq:Pk}. To prove the general case, we use
the strong Markov property and proceed by induction. So assume that under all $\Pb^x$, $x\in D$, the stopping time $\tau_n$ is a.s.\ finite and 
that $X_{\tau_n}$ has distribution $(h_D\mu)^n(x, \cdot)$.
Then, for the random shift $\Theta_{\tau_n}$, we have $\X_{\tau_n+t} =\X_t\circ\Theta_{\tau_n}$,  see \cite[Section IV.6]{MR850715}.
Recall that, by the right-continuity of the paths, $N_{\tau_n}=N_0+n$.
Then,
\begin{align}
    \tau\circ\Theta_{\tau_n} & = \inf \{t\geq 0 : N_t\circ\Theta_{\tau_n} = N_0\circ\Theta_{\tau_n}+1\}\nonumber\\
    & = \inf \{t\geq 0 : N_{\tau_n+t} = N_{\tau_n}+1\}\nonumber\\
    & = \inf \{t+\tau_n\geq \tau_n : N_{\tau_n+t} = N_{\tau_n}+1\} -\tau_n\nonumber\\
    & = \inf \{s\geq 0 : N_s = N_0 + (n+1)\} - \tau_n = \tau_{n+1}- \tau_n,\label{eq.shifted}
\end{align}
implying that
$\tau_{n+1} = \tau\circ\Theta_{\tau_n}+\tau_n$ is finite a.s. Since
$X_{\tau_{n+1}} = X_\tau\circ \Theta_{\tau_n}$,
it follows from
the strong Markov property, that
\begin{align*}
\Pb^x(X_{\tau_{n+1}}\in B) & = \expect^x \big[\Pb^{X_{\tau_n}}(X_\tau \in B)\big]\\
&= \int_D (h_D\mu)^n(x, \dx y) (h_D\mu)(y, B) 
= (h_D\mu)^{n+1}(x, B).\qedhere
\end{align*}
\end{proof}

\begin{defn}
\label{d.killed}
    Let $\hat{D} = D\cup\{\dagger\}$, where $\dagger$ is an isolated cemetery state. For $n\in \N_0$
    we define $X^{(n)} \coloneqq (X^{(n)}_t, t\geq 0)$ by setting 
    $X^{(n)}_t \coloneqq X_{\tau_n+t}$ if $0\leq t < \tau_{n+1}- \tau_n$ and $X^{(n)}_t = \dagger$ if $t \geq \tau_{n+1}-\tau_n$. 
\end{defn}
In particular, $X^{(0)}_0=X_{0}$ and  $X^{(n)}_0=X_{\tau_n}$ for $n\in \N$.
\begin{cor}\label{c.killedprocess}
For $n\in \N_0$, the process $X^{(n)}$ is an isotropic $\alpha$-stable L\'evy process killed upon exiting $D$.
If $x\in D$ and $n\in \N$, then under $\Pb^x$, the initial distribution of $X^{(n)}$ is  $(h_D\mu)^{n}(x, \cdot)$.
\end{cor}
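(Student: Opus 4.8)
The plan is to reduce the whole statement to the single--excursion case $n=0$ and then transport it to arbitrary $n$ by the strong Markov property. The claim about the initial distribution is immediate: for $n\in\N$ we have $X^{(n)}_0 = X_{\tau_n}$ by Definition~\ref{d.killed}, and Corollary~\ref{c.distributionsofxtau} identifies its law under $\Pb^x$ as $(h_D\mu)^n(x,\cdot)$; for $n=0$ the process simply starts at $x$, consistent with $(h_D\mu)^0=\delta_x$. The substantive content is that each excursion piece $X^{(n)}$ is, in law, the isotropic $\alpha$-stable process killed upon leaving $D$, i.e.\ has sub-Markovian transition function $p_D$.

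First I would settle $n=0$, where $X^{(0)}_t = X_t$ for $t<\tau$ and $X^{(0)}_t=\dagger$ afterwards. Here I would invoke Proposition~\ref{p.before1ststop} with $B=D$: since $\mu(z,D)=1$, the final triple integral in \eqref{e.dhm} reduces to $\int_0^\infty \dx r \int_D \dx v \int_{D^c}\dx z\, p_D(r,y_M,v)\nu(v,z)$, and by the Ikeda--Watanabe formula \eqref{eq:IW} (with $I=[0,\infty)$, $A=D$, $B=D^c$) this equals $\Pb^{y_M}(\tau_D<\infty)=1$, the last equality because $D$ is bounded. What remains is precisely the iterated $p_D$-integral of \eqref{e.dhmY}, so the finite-dimensional distributions of $(X_t)_{t<\tau}$ under $\Pb^x$ coincide with those of the killed stable process $Y^D$. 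Since a killed (sub-Markovian) process is determined by its transition function together with its starting point, and both $X^{(0)}$ and $Y^D$ sit in the cemetery once dead, this identifies the law of $X^{(0)}$ under $\Pb^x$ with that of $Y^D$ started at $x$.

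For general $n$ I would use the strong Markov property of the Hunt process $\X$ at $\tau_n$. The computation \eqref{eq.shifted} already yields $\tau_{n+1}-\tau_n = \tau\circ\Theta_{\tau_n}$, and $X_{\tau_n+t} = X_t\circ\Theta_{\tau_n}$, so $X^{(n)}$ is exactly the $n=0$ excursion piece of the shifted process $\X\circ\Theta_{\tau_n}$. By the strong Markov property, conditionally on $\X_{\tau_n}=(n,y)$ the shifted process is distributed as $\X$ started at $(n,y)$; and because $\k(t,(m,x),(n,y))=k_{n-m}(t,x,y)$ depends only on $n-m$, the law of that excursion piece is independent of the ladder coordinate (this is the content of Remark~\ref{r.general}). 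Hence, conditionally on $X_{\tau_n}=y$, the process $X^{(n)}$ is the stable process killed on exiting $D$ and started at $y$. Integrating against the law of $X_{\tau_n}$, which is $(h_D\mu)^n(x,\cdot)$ by Corollary~\ref{c.distributionsofxtau}, gives the assertion.

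The step I expect to require the most care is the strong Markov bookkeeping: one must condition on the full state $\X_{\tau_n}=(n,y)$, verify that the excursion functional $X^{(n)}$ is genuinely the composition of the $n=0$ excursion map with $\Theta_{\tau_n}$ (using \eqref{eq.shifted} together with the right-continuity of $N$), and only then use the $m$-independence to drop the ladder coordinate before integrating. By contrast, the passage from equality of finite-dimensional distributions to equality in law is routine once one observes that killed Markov processes with \cadlag\ paths are determined by their transition function and initial law.
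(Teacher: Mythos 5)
Your proposal is correct and follows essentially the same route as the paper: the case $n=0$ via Proposition~\ref{p.before1ststop} with $B=D$, then the strong Markov property at $\tau_n$ combined with \eqref{eq.shifted} and Remark~\ref{r.general} to identify $X^{(n)}$ with the zeroth excursion of the shifted process, and Corollary~\ref{c.distributionsofxtau} for the initial law. Your explicit verification that the trailing triple integral equals $\Pb^{y_M}(\tau_D<\infty)=1$ is a detail the paper leaves implicit, but it is the same argument.
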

\begin{proof}
It follows from Proposition \ref{p.before1ststop} with $B=D$, that the finite-dimensional distributions of $X^{(0)}$
are that of the killed isotropic $\alpha$-stable L\'evy process starting from $x$. This proves the claim for $n=0$. 
For $n\geq 1$, the strong Markov property 
implies that the process $\Z =(\Z_t, t\geq 0)$ defined by $Z_t\coloneqq \X_{\tau_n+t}$ is also a 
Markov process with transition semigroup $\K$. We let $\Z_t\eqqcolon (M_t,Z_t)$ and 
\[
    Z^{(0)}_t \coloneqq \begin{cases}
        Z_t, & \mbox{ if } 0\leq t < \inf\{ t\geq 0 : M_t \neq M_0\},\\
        \dagger, & \mbox{ else }.
    \end{cases}
\]

Taking Remark \ref{r.general} into account, it follows from Proposition \ref{p.before1ststop} applied to $\Z$ that
$Z^{(0)} \coloneqq (Z^{(0)}_t, t\geq 0)$ is an isotropic $\alpha$-stable L\'evy process killed upon exiting $D$. However, using
\eqref{eq.shifted} it follows that $Z^{(0)}_t = X^{(n)}_t$ for all $t\geq 0$. By Corollary \ref{c.distributionsofxtau}, the distribution of $X^{(n)}_0$ is
$(h_D\mu)^{n}(x, \cdot)$ for $n\ge 1$.
\end{proof}

\begin{rem}\label{r.nc2}
    The proof of Corollary~\ref{c.killedprocess} shows that $(X^{(0)},\tau)$ and $(Y^D,\tau_D)$ have the same distribution with respect to every $\Pb^x$, $x\in D$.
Of course, $X^{(0)}_t$ coincides with $X_t$ for  $t<\tau$. Furthermore, $X_\tau$ has the same distribution as $\mu(Y_{\tau_D}, \dx y)$,
which is $h_D\mu(x,\cdot)$ if the starting point is $x\in D$, see \eqref{eq.harmonic}.
The observations make justice to the heuristic description of $X$ in the introduction. 
Note that we may realize \( Y_t \) for \( t < \tau_D \) on the same probability space as \( X \) by letting \textit{a posteriori} \( Y_t \coloneqq X^{(0)}_t \). However, we do not claim that the distribution of \( X_{\tau} \) \textit{conditional} on \( Y_{\tau_D} \) is \( \mu(Y_{\tau_D}, \dx y) \). In fact, we do not realize \( Y_t \) for \( t \ge \tau_D \) on the same probability space as $X$. Such questions are left for the future.
\end{rem}

\section{Independent reflections}\label{s.ir}
Let $\mathfrak{m}$ be a probability measure on $D$. In this section we assume  
$$\mu(w,\dz):=\mathfrak{m}(\!\dz),\quad w\in D^c.$$ 
Of course, $\mu$ satisfies Hypothesis~\ref{hyp1}. 
We could say, somewhat ambiguously, that $\mu(w,\dz)$ is \textit{independent} of $w$. By Corollary \ref{c.distributionsofxtau},
$X_\tau$ has distribution $\mathfrak{m}$.
In what follows, we write $\Pb^\mathfrak{m}$ for the distribution of the process $\X$ starting from the distribution $\delta_0(\!\dx m)\mathfrak{m}(\!\dx x)$.
So, under $\Pb^\mathfrak{m}$, the  distribution of $X_0$ is $\mathfrak{m}$ and $\Pb^\mathfrak{m}(N_0=0)=1$.

Denote by $(\cF_t : t\geq 0)$ the natural filtration generated by $\X$. We put
\[
\cF_\infty \coloneqq \bigvee_{t\geq 0} \cF_t
\]
and define some related $\sigma$-algebras. The $\sigma$-algebra $\cF_{\tau-}$ is defined by
\[
\cF_{\tau-}\coloneqq \sigma ( \{t<\tau\}\cap \Lambda : \Lambda \in \cF_t, t \geq 0\},
\]
see \cite[Section 1.3]{MR2152573}. We note that this is different from $\cF_\tau$, the usual $\sigma$-algebra appearing in the strong Markov property. As $\cF_t = \sigma(\X_s : s \leq t)$
and $N_s = 0$ for every $s<\tau$, it follows that $\cF_{\tau-}$ is generated by sets of the form
\[
\{ \X_s \in \{0\}\times A \}, \quad s\geq 0, A \subset D.
\]
We also define
\[
\cG_\tau \coloneqq \sigma (\X_{\tau+t} : t\geq 0).
\]

Making use of the shift $\Theta_\tau$,
we get $\cG_\tau = \Theta_\tau\cF_\infty$.

\begin{lem}\label{l.independence}
\begin{enumerate}
[\upshape (a)]
\item Under every measure $\Pb^x$, the $\sigma$-algebras $\mathscr{F}_{\tau-}$ and $\sigma(X_\tau)$ are independent
and for $\Lambda\in \mathscr{F}_{\tau-}$ and $B\subset D$, we have
\[
\Pb^x(\Lambda\cap\{X_\tau \in B\}) = \Pb^x(\Lambda)\mathfrak{m}(B).
\]
\item Under the measure $\Pb^\mathfrak{m}$, the $\sigma$-algebras $\mathscr{F}_{\tau-}$ and $\mathscr{G}_\tau$ are independent.
\end{enumerate}
\end{lem}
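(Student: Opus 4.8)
The plan is to deduce both parts from the factorization already built into Proposition~\ref{p.before1ststop}, combined (for part (b)) with the strong Markov property at $\tau$.

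For part (a), I would first record that, as observed just before the lemma, $\mathscr{F}_{\tau-}$ is generated by the collection $\mathcal{C}$ of cylinder events
\[
\Lambda = \{X_{t_1}\in A_1,\dots, X_{t_M}\in A_M,\ \tau>t_M\},\qquad 0\le t_1<\cdots<t_M,\ A_i\subset D,
\]
which is a $\pi$-system: since $\{s<\tau\}=\{N_s=0\}$, the generating sets $\{X_s\in A,\ s<\tau\}$ are of this form, and a finite intersection is again of this form after merging the time grids. Now apply Proposition~\ref{p.before1ststop} with $\mu(z,B)=\mathfrak{m}(B)$. Then $\mathfrak{m}(B)$ factors out of the final triple integral in \eqref{e.dhm}, and the remaining factor is exactly the right-hand side of \eqref{e.dhm} with $B=D$ (where $\mu(z,D)=\mathfrak{m}(D)=1$), that is, it equals $\Pb^x(\Lambda)$. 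This yields
\[
\Pb^x(\Lambda\cap\{X_\tau\in B\})=\mathfrak{m}(B)\,\Pb^x(\Lambda),\qquad \Lambda\in\mathcal{C},\ B\subset D.
\]
Taking $A_1=\cdots=A_M=D$ (so $\Lambda$ has full measure) gives $\Pb^x(X_\tau\in B)=\mathfrak{m}(B)$, so the display is precisely the independence factorization. Since $\mathcal{C}$ and $\{\{X_\tau\in B\}:B\subset D\}$ are $\pi$-systems generating $\mathscr{F}_{\tau-}$ and $\sigma(X_\tau)$, a routine Dynkin ($\pi$–$\lambda$) argument extends the factorization to all $\Lambda\in\mathscr{F}_{\tau-}$, which is (a).

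For part (b), it suffices to prove $\expect^\mathfrak{m}[FG]=\expect^\mathfrak{m}[F]\expect^\mathfrak{m}[G]$ for bounded $F$ that is $\mathscr{F}_{\tau-}$-measurable and bounded $G$ that is $\mathscr{G}_\tau$-measurable. As $\mathscr{G}_\tau=\Theta_\tau\mathscr{F}_\infty$, I would write $G=\Phi\circ\Theta_\tau$ with $\Phi$ bounded and $\mathscr{F}_\infty$-measurable. Under $\Pb^\mathfrak{m}$ we have $N_0=0$ a.s., hence $N_\tau=1$ and $\X_\tau=(1,X_\tau)$; the strong Markov property at $\tau$ then gives
\[
\expect^\mathfrak{m}[G\mid\mathscr{F}_\tau]=\expect^{\X_\tau}[\Phi]=\psi(X_\tau),\qquad \psi(y):=\expect^{(1,y)}[\Phi].
\]
Because $\mathscr{F}_{\tau-}\subset\mathscr{F}_\tau$, the variable $F$ is $\mathscr{F}_\tau$-measurable, and so $\expect^\mathfrak{m}[FG]=\expect^\mathfrak{m}[F\,\psi(X_\tau)]$ by pulling $F$ out of the conditional expectation.

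It remains to evaluate $\expect^\mathfrak{m}[F\,\psi(X_\tau)]=\int_D \expect^x[F\,\psi(X_\tau)]\,\mathfrak{m}(\dx x)$. Here lies the one genuinely delicate point, which I expect to be the main obstacle: independence under each individual $\Pb^x$ need \emph{not} persist after mixing over the starting point. What rescues the argument is the independence hypothesis of this section, which via (a) forces the law of $X_\tau$ to be $\mathfrak{m}$ for \emph{every} starting point; hence $\expect^x[\psi(X_\tau)]=\int_D\psi\,\mathfrak{m}(\dx y)=:\bar\psi$ does not depend on $x$. Applying (a) fiberwise gives $\expect^x[F\,\psi(X_\tau)]=\expect^x[F]\,\bar\psi$, and integrating against $\mathfrak{m}$ yields $\expect^\mathfrak{m}[F\,\psi(X_\tau)]=\bar\psi\,\expect^\mathfrak{m}[F]$. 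Since the tower property gives $\expect^\mathfrak{m}[G]=\expect^\mathfrak{m}[\psi(X_\tau)]=\bar\psi$, we conclude $\expect^\mathfrak{m}[FG]=\expect^\mathfrak{m}[F]\expect^\mathfrak{m}[G]$, which is (b). The remaining verifications (measurability of $x\mapsto\expect^x[\,\cdot\,]$ for the disintegration $\Pb^\mathfrak{m}=\int\Pb^x\,\mathfrak{m}(\dx x)$, and that bounded-function factorization follows from $\sigma$-algebra independence in (a)) are routine.
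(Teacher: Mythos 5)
Your proof is correct and follows essentially the same route as the paper's: part (a) is exactly the factorization from Proposition \ref{p.before1ststop} specialized to $\mu(z,\cdot)\equiv\mathfrak{m}$, extended by a $\pi$-system argument, and part (b) is the strong Markov property at $\tau$ combined with a fiberwise application of (a), using that the law of $X_\tau$ under every $\Pb^x$ is $\mathfrak{m}$, hence independent of the starting point. Your explicit tracking of the ladder level in $\psi(y)=\expect^{(1,y)}[\Phi]$ is in fact slightly more careful than the paper's shorthand $\Pb^{X_\tau}$, but the argument is the same.
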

\begin{proof}
(a) For  $0\leq t_1 < t_2 <\cdots < t_N$ and $B,A_1, \ldots A_N \subset D$, define 
\[
\Lambda\coloneqq \{ X_{t_1}\in A_1, \ldots, X_{t_N} \in A_n, \tau > t_{N}\}.
\]
Then $\Lambda\in \mathscr{F}_{\tau-}$.
It follows from Proposition \ref{p.before1ststop} and the special structure of $\mu(z, \cdot)\equiv \mathfrak{m}(\cdot)$,
that $\Pb^x(\Lambda\cap\{X_\tau \in B\}) = \Pb^x(\Lambda)\mathfrak{m}(B)$.
Since events of the form $\{X_{t_1}\in A_1, \ldots, X_{t_N}\in A_N, \tau> t_N\}$ generate the $\sigma$-algebra $\mathscr{F}_{\tau-}$
and are stable under intersections, (a) is proved.\medskip

\noindent
(b) By the strong Markov property, $(\X_{\tau+t})_{t\geq 0}$ is a Markov process with transition semigroup $\K$. Thus, 
if $\Gamma=\Theta_\tau\tilde{\Gamma} \in \mathscr{G}_\tau$, where $\tilde{\Gamma} \in \mathscr{F}_\infty$, then
$\Pb^x(\Gamma) = \Pb^\rho (\tilde{\Gamma})$ for all $x\in D$. Here, $\rho$ is the distribution of $X_\tau$ under the measure $\Pb^x$. 
By part (a), $\rho=\mathfrak{m}$ and it follows that for $\Lambda\in \mathscr{F}_{\tau-}$,
\[
\Pb^{\mathfrak{m}}(\Lambda\cap \Gamma ) = \expect^\mathfrak{m}\one_\Lambda \expect^{X_\tau}\one_{\tilde{\Gamma}}
= \Pb^{\mathfrak{m}}(\Lambda) \int_D \Pb^x(\tilde{\Gamma}) \mathfrak{m}(\dx x) = \Pb^\mathfrak{m}(\Lambda) \Pb^\mathfrak{m}(\Gamma).\qedhere
\]
\end{proof}

\begin{cor}\label{c.independent}
Under $\Pb^\mathfrak{m}$, the processes $(X^{(n)})_{n\in \N_0}$ from Definition \ref{d.killed}
are independent and have the same distribution
as the killed $\alpha$-stable process with initial distribution $\mathfrak{m}$.
\end{cor}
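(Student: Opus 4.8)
The plan is to combine the two parts of Lemma \ref{l.independence} with Corollary \ref{c.killedprocess} and an induction on the number of reflections. Corollary \ref{c.killedprocess} already tells us that each $X^{(n)}$ is a killed $\alpha$-stable process, and that under $\Pb^x$ the initial distribution of $X^{(n)}$ is $(h_D\mu)^n(x,\cdot)$. In the independent-reflection setting $\mu(z,\cdot)\equiv\mathfrak{m}$, so $h_D\mu(x,\cdot)=\mathfrak{m}$ for every $x$, and hence $(h_D\mu)^n(x,\cdot)=\mathfrak{m}$ for all $n\ge 1$; under $\Pb^\mathfrak{m}$ the initial law of $X^{(0)}$ is $\mathfrak{m}$ as well. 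This already establishes that every $X^{(n)}$ has the claimed marginal distribution, so the only genuine content remaining is the \emph{independence} of the family.

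For the independence, the key observation is that $X^{(0)}$ is $\mathscr{F}_{\tau-}$-measurable, while $(X^{(n)})_{n\ge 1}$ is built entirely from the post-$\tau$ process $\X_{\tau+\cdot}$ and is therefore $\mathscr{G}_\tau$-measurable. Indeed, by the definition of $X^{(0)}$ and the fact that $N_s=0$ for $s<\tau$, the path $(X^{(0)}_t)_{t\ge 0}$ is a function of $(\X_{s\wedge\tau})_{s\ge0}$, hence $\mathscr{F}_{\tau-}$-measurable; and by \eqref{eq.shifted} each $X^{(n)}$ with $n\ge 1$ is a measurable functional of $\Theta_\tau\X$. First I would invoke Lemma \ref{l.independence}(b) to conclude that under $\Pb^\mathfrak{m}$ the processes $X^{(0)}$ and $(X^{(n)})_{n\ge1}$ are independent. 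Then, to handle the whole family, I would proceed by induction: by the strong Markov property, $\Z\coloneqq\X_{\tau+\cdot}$ is again a Markov process with semigroup $\K$, and by Lemma \ref{l.independence}(a) together with part (a)'s identification $\rho=\mathfrak{m}$, the post-$\tau$ process starts from the distribution $\delta_0\otimes\mathfrak{m}$, i.e.\ it is governed by $\Pb^\mathfrak{m}$. Thus $(X^{(n)})_{n\ge 1}$ has, under $\Pb^\mathfrak{m}$, exactly the same joint law as $(X^{(n)})_{n\ge 0}$ does, and one can apply the induction hypothesis to the shifted family to split off $X^{(1)}$ from $(X^{(n)})_{n\ge2}$, and so on.

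Concretely, I would phrase the induction as the statement that for every $N$, the variables $X^{(0)},\dots,X^{(N)}$ are independent, each with law $\mathfrak{m}$ for its initial point and killed-stable dynamics thereafter. The base case $N=0$ is Corollary \ref{c.killedprocess}. For the inductive step, Lemma \ref{l.independence}(b) gives that $X^{(0)}$ (being $\mathscr{F}_{\tau-}$-measurable) is independent of the $\mathscr{G}_\tau$-measurable block $(X^{(1)},\dots,X^{(N)})$; since $\Z=\Theta_\tau\X$ is Markov with semigroup $\K$ started from $\delta_0\otimes\mathfrak{m}$ under $\Pb^\mathfrak{m}$, the block $(X^{(1)},\dots,X^{(N)})$ has the same joint distribution under $\Pb^\mathfrak{m}$ as $(X^{(0)},\dots,X^{(N-1)})$, to which the induction hypothesis applies, yielding mutual independence within the block. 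Independence of $X^{(0)}$ from the block together with mutual independence inside the block gives the full mutual independence of $X^{(0)},\dots,X^{(N)}$.

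The main obstacle I anticipate is not the marginal distributions—these follow immediately from the collapse $(h_D\mu)^n\equiv\mathfrak{m}$—but rather the bookkeeping required to justify the measurability claims precisely and to transfer independence from a two-block splitting to the full countable family. In particular, one must be careful that $X^{(0)}$ really is $\mathscr{F}_{\tau-}$-measurable and that the entire tail $(X^{(n)})_{n\ge1}$ really is $\mathscr{G}_\tau$-measurable, using \eqref{eq.shifted} and the shift relation $\X_{\tau+t}=\X_t\circ\Theta_\tau$. Once these measurability facts are in place, the strong Markov property and Lemma \ref{l.independence} do the rest, and mutual independence of the countable family follows because finite-dimensional independence of $X^{(0)},\dots,X^{(N)}$ for all $N$ is equivalent to mutual independence of the whole sequence.
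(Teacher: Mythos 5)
Your proof is correct and takes essentially the same route as the paper, whose entire argument reads: the marginals follow from Corollaries \ref{c.killedprocess} and \ref{c.distributionsofxtau} (with the collapse $(h_D\mu)^n\equiv\mathfrak{m}$), and ``the independence follows inductively from Lemma \ref{l.independence}(b)''---your write-up simply fills in that induction and the measurability bookkeeping. One minor slip: since $N_\tau=1$ under $\Pb^\mathfrak{m}$, the shifted process $\X_{\tau+\cdot}$ starts from $\delta_1\otimes\mathfrak{m}$, not $\delta_0\otimes\mathfrak{m}$; but because $\k(t,(m,x),(n,y))=k_{n-m}(t,x,y)$ depends only on $n-m$ (cf.\ Remark \ref{r.general}), the law of the $X$-component and of the reflection times is unaffected by the ladder offset, so your identification of the law of $(X^{(1)},\dots,X^{(N)})$ with that of $(X^{(0)},\dots,X^{(N-1)})$ stands and the induction goes through.
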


\begin{proof}
By Corollary \ref{c.killedprocess} and \ref{c.distributionsofxtau}, every process $X^{(n)}$ is a killed $\alpha$-stable process with initial distribution $\mathfrak{m}$. The independence follows inductively from 
Lemma \ref{l.independence}(b).
\end{proof}

It was proved in \cite[Theorem 6.2]{Bogdan2023} that the semigroup $K$ has a unique stationary distribution (density) $\kappa$. 
We can now explicate $\kappa$; the following two theorems are analogues of \cite[Proposition 1]{b-ap09}, but the proofs are different.
\begin{thm}\label{t.sm}
If there is a probability measure $\mathfrak{m}$ on $D$ such that $\mu(w,\dz)=\mathfrak{m}(\!\dz)$ for all $w\in D$, then the stationary density $\kappa$ equals $c\mathfrak{m} g_D$. 
\end{thm}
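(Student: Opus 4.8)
The plan is to identify $\mathfrak{m}g_D$ as (a constant multiple of) the expected occupation measure of a single excursion and then to exploit the i.i.d.\ regenerative structure established in Corollary~\ref{c.independent}. Since \cite[Theorem~6.2]{Bogdan2023} guarantees that $K$ has a \emph{unique} stationary density, it suffices to exhibit \emph{one} finite stationary measure whose density is proportional to $\mathfrak{m}g_D$; the constant $c$ is then just the normalization, $c = 1/\expect^\mathfrak{m}[\tau_D]$.

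Concretely, I would introduce the measure $\nu$ on $D$ defined by $\nu(f) = \expect^\mathfrak{m}\big[\int_0^\tau f(X_s)\dx s\big]$ for $f\ge 0$, and first show $\nu = \mathfrak{m}g_D$. Indeed, by Corollary~\ref{c.killedprocess} and Remark~\ref{r.nc2} the part $X^{(0)}$ of $X$ run until $\tau$ is an isotropic $\alpha$-stable process killed on leaving $D$, with initial law $\mathfrak{m}$, so by \eqref{e.dPD2} and \eqref{eq.green},
\[
\nu(f) = \int_D \mathfrak{m}(\dx x)\,\ee^x\!\!\int_0^{\tau_D}\! f(Y_s)\dx s = \int_D \mathfrak{m}(\dx x)\int_0^\infty P_D(s)f(x)\dx s = \mathfrak{m}\,g_D f.
\]
In particular $\nu(D) = \expect^\mathfrak{m}[\tau_D]<\infty$, as $x\mapsto \ee^x\tau_D = g_D\one(x)$ is bounded on the bounded set $D$; thus $\nu$ is a finite measure with density $y\mapsto \int_D g_D(x,y)\,\mathfrak{m}(\dx x)$, i.e.\ $\mathfrak{m}g_D$.

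Second, and this is the heart of the argument, I would verify that $\nu$ is stationary, that is $\nu(K(t)f)=\nu(f)$ for all $t>0$ and bounded $f$. Starting from $\nu(K(t)f)=\expect^\mathfrak{m}\big[\int_0^\tau K(t)f(X_s)\dx s\big]$ and using that $\{s<\tau\}=\{N_s=0\}\in\cF_s$, Fubini together with the Markov property of $X$ (so that $K(t)f(X_s)=\expect^\mathfrak{m}[f(X_{s+t})\mid\cF_s]$) gives
\[
\nu(K(t)f) = \int_0^\infty \expect^\mathfrak{m}\big[\one_{\{s<\tau\}} f(X_{s+t})\big]\dx s = \expect^\mathfrak{m}\Big[\int_t^{\tau+t} f(X_u)\dx u\Big].
\]
Comparing with $\nu(f)=\expect^\mathfrak{m}\big[\int_0^{\tau} f(X_u)\dx u\big]$, the difference equals $\expect^\mathfrak{m}\big[\int_\tau^{\tau+t} f(X_u)\dx u\big]-\expect^\mathfrak{m}\big[\int_0^{t} f(X_u)\dx u\big]$. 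I would close the argument by showing these two terms coincide: by the strong Markov property at $\tau$ the shifted process $(X_{\tau+u})_{u\ge 0}$ is a copy of $X$ started from $X_\tau$, and $X_\tau$ has law $\mathfrak{m}$ by Corollary~\ref{c.distributionsofxtau}; since by \eqref{eq.Xdistribution} the law of $X_u$ does not depend on the $\N_0$-coordinate of the starting point, $\expect^\mathfrak{m}\big[\int_\tau^{\tau+t} f(X_u)\dx u\big]=\int_D \mathfrak{m}(\dx x)\,\expect^x\!\int_0^t f(X_u)\dx u=\expect^\mathfrak{m}\big[\int_0^t f(X_u)\dx u\big]$. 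Hence $\nu(K(t)f)=\nu(f)$, so $\nu=\mathfrak{m}g_D$ is stationary, and by uniqueness $\kappa = c\,\mathfrak{m}g_D$.

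The main obstacle is the careful measure-theoretic bookkeeping in the stationarity computation: justifying Fubini on the random time-interval $[0,\tau)$ (which is legitimate precisely because $\{s<\tau\}$ is $\cF_s$-measurable), invoking the ordinary Markov property pointwise in $s$ to insert $K(t)$, and then applying the strong Markov property at $\tau$ to realize the post-$\tau$ path as a fresh copy started from $\mathfrak{m}$. Everything else is a direct consequence of the regenerative structure already in hand, so this is where I would spend the most care.
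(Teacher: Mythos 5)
Your proposal is correct, but it takes a genuinely different route from the paper's proof of Theorem~\ref{t.sm} --- and, amusingly, it essentially reproduces the paper's proof of the \emph{general} Theorem~\ref{t.invariantmeasure2} in Section~\ref{s.id}, specialized to the independent case. The paper proves Theorem~\ref{t.sm} probabilistically: it uses Corollary~\ref{c.independent} to get i.i.d.\ excursions $(X_{\tau_n+t},\,0\le t<\tau_{n+1}-\tau_n)$ under $\Pb^\mathfrak{m}$, and then applies the strong law of large numbers together with the ergodic theorem to identify $\kappa$ through the almost-sure limit of time averages $\frac1T\int_0^T f(X_t)\dt$, i.e., as the ratio $\expect^\mathfrak{m}\int_0^{\tau_1}f(X_t)\dt\,/\,\expect^\mathfrak{m}\tau_1$. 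You instead verify directly that the expected occupation measure $\nu(f)=\expect^\mathfrak{m}\int_0^\tau f(X_s)\ds=\mathfrak{m}g_Df$ satisfies $\nu(K(t)f)=\nu(f)$, via the Markov property on $\{s<\tau\}=\{N_s=0\}$ and the strong Markov property at $\tau$ with $X_\tau\sim\mathfrak{m}$, and then conclude by the uniqueness from \cite[Theorem 6.2]{Bogdan2023}; this is precisely the computation $\langle K(t)f,\mathfrak{p}g_D\rangle=\langle f,\mathfrak{p}g_D\rangle$ the paper performs with $\mathfrak{p}$ in place of $\mathfrak{m}$ (in the independent case $h_D\mu(x,\cdot)\equiv\mathfrak{m}$, so the stationary law $\mathfrak{p}$ of the chain $R_n=X_{\tau_n}$ is $\mathfrak{m}$). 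The trade-off: the paper's SLLN argument yields a pathwise ergodic statement (a.s.\ convergence of time averages) but genuinely needs the independence of excursions, which is special to $\mu(w,\dz)=\mathfrak{m}(\dz)$; your argument uses only the renewal of the initial law at $\tau$, not independence, which is exactly why it extends verbatim to general $\mu$. Your bookkeeping is sound: Fubini is legitimate since $f$ is bounded and $\expect^\mathfrak{m}\tau=\expect^\mathfrak{m}\tau_D<\infty$ (so all three terms in the decomposition of $\expect^\mathfrak{m}\int_t^{\tau+t}f(X_u)\dx u$ are finite), the insertion of $K(t)$ uses \eqref{eq.Xdistribution2} so that conditioning on $\cF_s$ (the filtration of $\X$, in which $\{N_s=0\}$ is measurable) is the right move, and your observation via \eqref{eq.Xdistribution} that the post-$\tau$ law of $X$ does not see the ladder coordinate $N_\tau=1$ correctly disposes of the only subtlety in applying the strong Markov property on $\D$.
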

\begin{proof}
We have
\[
\expect^\mathfrak{m} \tau_1=\int_D\int_D \mathfrak{m}(\!\dx x) g_D(x,y)\dy
\]
and, for every bounded function $f:D\to[0,\infty]$,
\[
\expect^\mathfrak{m} \int_0^{\tau_1} f(X_t)dt=\int_D\int_D \mathfrak{m}(\!\dx x)g_D(x,y) f(y)\dx y.
\]
By Corollary \ref{c.independent}, 
the processes $(X_{\tau_n+t}, 0\le t<\tau_{n+1}-\tau_n)$ are independent and identically distributed under $\Pb^\mathfrak{m}$.
By the strong law of large numbers and the ergodic theorem \cite[Theorem 3.3.1 and 3.2.6 and Proposition 2.1.1]{MR1417491},
it holds $\Pb^\mathfrak{m}$-almost surely that
\begin{align*}
\expect^\mathfrak{m} \int_0^{\tau_1} f(X_t)dt/\expect^\mathfrak{m} \tau_1
&= \lim_{n\to \infty}
\int_{0}^{\tau_{n}} f(X_t)\dx t/\int_{0}^{\tau_{n}}\dx t\\
&= \lim_{T\to \infty}\frac{1}{T}
\int_{0}^{T} f(X_t)\dx t=\int_Df(x)\kappa(\!\dx x).
\end{align*}
This yields the result.
Of course, $c=1/\int_D\int_D g_D(x,y)\mathfrak{m}(\dy)\dx x$. 
\end{proof}

\section{Stationary measure: general case}\label{s.id}
This section extends Theorem~\ref{t.sm} to arbitrary kernels $\mu$ considered in this paper, using a different proof.
Let $R_n\coloneqq X_{\tau_n}$, the position after the $n$-th reflection. Recall $h_D$ from \eqref{eq.harmonic}.
\begin{lem}
    \label{l.discretemc}
    The process $(R_n)_{n\in \N}$ is a discrete time Markov chain with transition kernel $h_D\mu$ and
a unique stationary measure  $\mathfrak{p}$.
\end{lem}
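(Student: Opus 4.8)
The plan is to establish two essentially separate claims: first, that $(R_n)_{n\in\N}$ is a Markov chain with the stated transition kernel $h_D\mu$, and second, that this chain admits a unique stationary measure $\mathfrak{p}$. For the Markov property, I would invoke the strong Markov property of the Hunt process $\X$ together with the computation already carried out in the proof of Corollary~\ref{c.distributionsofxtau}. Indeed, using the shift $\Theta_{\tau_n}$ and the identity $\tau\circ\Theta_{\tau_n} = \tau_{n+1}-\tau_n$ from \eqref{eq.shifted}, one sees that $R_{n+1} = X_{\tau_{n+1}} = X_\tau\circ\Theta_{\tau_n}$, so that conditionally on $\mathscr{F}_{\tau_n}$ the distribution of $R_{n+1}$ depends only on $X_{\tau_n} = R_n$. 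The one-step transition law is exactly the $n=1$ case of Corollary~\ref{c.distributionsofxtau}, namely $\Pb^x(X_\tau\in B) = (h_D\mu)(x,B)$, which gives $\Pb(R_{n+1}\in B \mid R_n = y) = (h_D\mu)(y,B)$. This part should be routine given the machinery already in place.

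The substantive part is the existence and uniqueness of the stationary measure. The natural strategy is to verify that the chain satisfies a suitable irreducibility and recurrence condition and then to quote a standard ergodic theorem for Markov chains on general state spaces (e.g. from \cite{MR1417491} or Meyn--Tweedie). The key structural input is Hypothesis~\ref{hyp1}(ii): there is a compact set $H\subset D$ with $\inf_{z\in D^c}\mu(z,H) = \vartheta > 0$. Since $h_D(x,\cdot)$ is a probability measure on $D^c$ for every $x\in D$ (the $\alpha$-stable process leaves $D$ by a jump almost surely, so $Y_{\tau_D}\in D^c$), it follows that $(h_D\mu)(x,H) = \int_{D^c} h_D(x,\dx z)\mu(z,H) \ge \vartheta$ for every $x\in D$. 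This uniform lower bound is a Doeblin-type minorization: every one-step transition puts mass at least $\vartheta$ into the fixed compact set $H$, uniformly in the starting point.

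From the Doeblin condition the conclusion follows by a standard argument. The minorization $(h_D\mu)(x,\cdot) \ge \vartheta\,\eta(\cdot)$ with $\eta$ a probability measure supported in $H$ (for instance $\eta = $ the normalized restriction of $h_D\mu$, or one can extract a genuine minorizing measure from the strong Feller regularity of $h_D$) yields a uniform contraction in total variation: for any two initial laws, the $n$-step distributions satisfy $\|\mathfrak{\rho}_1 (h_D\mu)^n - \mathfrak{\rho}_2 (h_D\mu)^n\|_{\tv} \le (1-\vartheta)^n$. This simultaneously gives existence of a stationary measure $\mathfrak{p}$ (as the limit of $(h_D\mu)^n(x,\cdot)$, a Cauchy sequence in total variation by the contraction) and its uniqueness (any two stationary measures must coincide by letting $n\to\infty$). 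The main technical point to be careful about is checking that $h_D\mu$ is a genuine Markov kernel on $D$ and that the minorizing measure $\eta$ is a bona fide probability measure on the Borel sets of $D$; strong Feller properties of $h_D$ and the measurability assumptions in Hypothesis~\ref{hyp1}(i) ensure this.

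I expect the main obstacle to be purely expository rather than conceptual: deciding how much of the Doeblin/contraction argument to spell out versus citing, and ensuring the minorization is stated for a true probability measure so that the total-variation contraction applies verbatim. The ergodic content is entirely supplied by Hypothesis~\ref{hyp1}(ii), so once the uniform lower bound $(h_D\mu)(x,H)\ge\vartheta$ is recorded, the remainder is a classical consequence.
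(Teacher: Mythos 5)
Your first paragraph (the Markov property via the strong Markov property of $\X$ and the $n=1$ case of Corollary~\ref{c.distributionsofxtau}) is exactly the paper's argument. The gap is in the ergodicity part. The bound $(h_D\mu)(x,H)\ge\vartheta$ is correct, but it is \emph{not} a Doeblin minorization: it says one step puts mass $\vartheta$ into the compact set $H$, not that $(h_D\mu)(x,\cdot)\ge\vartheta\,\eta(\cdot)$ for a \emph{fixed} probability measure $\eta$, and the latter is what the total-variation contraction needs. Neither of your suggested repairs works: the ``normalized restriction of $h_D\mu$'' depends on the starting point $x$, and the strong Feller property gives continuity, not uniform lower bounds. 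Hypothesis~\ref{hyp1}(ii) only controls the \emph{mass} $\mu(z,H)$, not the shape of $\mu(z,\cdot)$ on $H$, which may vary arbitrarily with $z$; and since $h_D(x,\cdot)$ redistributes its mass as $x\to\partial D$ (by \eqref{eq:Pk}, $h_D(x,w)=\int_D g_D(x,v)\nu(v,w)\dx v\to 0$ for fixed $w$), no one-step minorization uniform in $x\in D$ can be extracted from Hypothesis~\ref{hyp1} alone. Consequently your claimed one-step contraction rate $(1-\vartheta)^n$ is unjustified.

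The missing idea, and the actual content of the paper's proof, is that $H$ is a \emph{small set}, and this requires a quantitative input beyond Hypothesis~\ref{hyp1}: the Harnack inequality for $\alpha$-harmonic functions \cite{MR1703823}. For fixed $w\in D^c$ the function $x\mapsto h_D(x,w)$ is $\alpha$-harmonic in $D$, so there are $x_0\in H$ and $C>0$ with $h_D(y,w)\ge C\,h_D(x_0,w)$ for all $y\in H$ and $w\in D^c$. Combining this with your mass bound gives a genuine Dobrushin condition, but only for the \emph{two-step} kernel: $(h_D\mu)^2(x,A)\ge \int_H (h_D\mu)(x,\dx y)\,(h_D\mu)(y,A)\ge C\vartheta \int_{D^c} h_D(x_0,w)\mu(w,A)\dx w$, whence $\|(h_D\mu)^2(x,\cdot)-(h_D\mu)^2(y,\cdot)\|_{\tv}\le 2(1-C\vartheta)$ and uniqueness follows. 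For existence the paper does not iterate $(h_D\mu)^n$ at all; it applies the same Harnack bound to the auxiliary kernel $\mu h_D$ on $D^c$, which satisfies the one-step minorization $\mu h_D(z,\cdot)\ge C\vartheta\, h_D(x_0,\cdot)$ and hence has a unique stationary law $\eta$ by \cite[Theorem 1.3]{kulik15}; then $\mathfrak{p}=\eta\mu$ is stationary for $h_D\mu$ since $\eta\mu h_D\mu=(\eta\mu h_D)\mu=\eta\mu$. So your Doeblin/contraction architecture is the right family of ideas, but the step from ``uniform mass into $H$'' to an honest minorizing measure is precisely where the Harnack inequality is indispensable, and your proposal as written omits it.
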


\begin{proof}
    That $(R_n)_{n\in \N}$ is a discrete Markov chain follows from the strong Markov property of the 
    continuous time Markov process $(\X_t)_{t\geq 0}$ and the statement about the transition probabilities follows
    from Corollary \ref{c.distributionsofxtau}. As for the existence of a stationary measure,
    we consider an auxiliary transition kernel $\mu h_D$ on  $D^c$. Since $h_D$ has a density, the same is true of $\mu h_D$. 
By Hypothesis \ref{hyp1}(ii) and the Harnack inequality \cite{MR1703823}, there is a constant $C$ such that for all $z,w\in D^c$, 
\begin{align*}
\mu h_D(z,w)&\ge 
\int_H \mu(z,\dx x) h_D(x,w)\\
&\ge
C\int_H \mu(z,\dx x) h_D(x_0,w)\ge 
C\vartheta h_D(x_0,w)>0,
\end{align*}
where $x_0$ is an arbitrary point in $H$. This \emph{minorization condition} 
yields the Dobrushin condition whence a unique stationary   distribution $\eta$ for $\mu h_D$: $\eta \mu h_D=\eta$, see \cite[Theorem 1.3 and p. 33]{kulik15}.
Then $\mathfrak{p}\coloneqq h_D \eta \mu$ is stationary for $h_D\mu$. Indeed, $h_D\eta\mu h_D\mu=h_D(\eta\mu h_D)\mu=h_D\eta\mu$. Uniqueness of the
stationary measure for $h_D\mu$ follows the same lines: The measure is stationary for $(h_D\mu)^2=h_D\mu h_D\mu$. Then, for $x\in D$, $A\subset D$,
\begin{align*}
h_D\mu h_D\mu(x,A)&\ge
\int_H h_D\mu(x,\dv) \int_{D^c} h_D(v,w)  \mu(w,A)\dx w\\
&\ge
C
\int_H h_D\mu(x,\dv) \int_{D^c} h_D(x_0,w)  \mu(w,A)\dx w\\
&\ge
C \vartheta
\int_{D^c} h_D(x_0,w) \mu(w,A)\dx w.
\end{align*}
Therefore we get the Dobrushin condition:
\[
\|h_D\mu h_D\mu(x,\cdot)-h_D\mu h_D\mu(y,\cdot)\|_{TV}\le 2(1- C\vartheta).\qedhere
\]
\end{proof}
   
    Recall that $\mathfrak{p}$ is the unique stationary distribution of the Markov chain $(R_n)_{n\in \N}$ and $\kappa$ is the unique stationary distribution  of the semigroup $K$.
\begin{thm}
    \label{t.invariantmeasure2}
 The stationary density $\kappa$ equals \(c \mathfrak{p} g_D\).
\end{thm}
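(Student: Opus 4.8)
The plan is to mimic the proof of Theorem~\ref{t.sm}, replacing the i.i.d.\ structure of the excursions (which we had under $\Pb^\mathfrak{m}$ when $\mu$ was independent of $w$) with the ergodic theory of the Markov chain $(R_n)_{n\in\N}$ established in Lemma~\ref{l.discretemc}. First I would start the process from its stationary position, i.e.\ work under $\Pb^\mathfrak{p}$ where $\mathfrak{p}$ is the stationary distribution of $(R_n)$. Under this measure the sequence $(R_n)_{n\in\N}$ is a stationary and ergodic Markov chain, and by Corollary~\ref{c.killedprocess} the successive excursions $(X^{(n)})_{n\in\N_0}$ are governed by a Markov-modulated structure: the $n$-th excursion is a killed $\alpha$-stable process started from $R_n$. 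The key observation is that the pair sequence
\begin{equation*}
\xi_n \coloneqq \Big(R_n,\ \int_0^{\tau_{n+1}-\tau_n} f\big(X^{(n)}_t\big)\dx t,\ \tau_{n+1}-\tau_n\Big)
\end{equation*}
forms a stationary ergodic sequence, since each coordinate is a measurable functional of $R_n$ together with the independent driving randomness of the $n$-th excursion.

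Next I would record the two conditional expectations coming from the Green kernel. Conditionally on $R_n = x$, the excursion is a killed process starting at $x$, so
\begin{equation*}
\expect\Big[\int_0^{\tau_{n+1}-\tau_n} f(X^{(n)}_t)\dx t \,\Big|\, R_n = x\Big] = \int_D g_D(x,y) f(y)\dx y = g_D f(x),
\end{equation*}
and in particular, taking $f=\one$, the mean excursion length is $g_D\one(x)=\int_D g_D(x,y)\dy$. Integrating against the stationary law $\mathfrak{p}$ gives
\begin{equation*}
\expect^\mathfrak{p}\!\int_0^{\tau_1} f(X_t)\dx t = \mathfrak{p} g_D f \coloneqq \int_D\int_D \mathfrak{p}(\dx x)\, g_D(x,y)\, f(y)\dx y,
\qquad
\expect^\mathfrak{p}\tau_1 = \mathfrak{p} g_D \one.
\end{equation*}
Then I would invoke Birkhoff's ergodic theorem for the stationary ergodic sequence $\xi_n$ to get, $\Pb^\mathfrak{p}$-almost surely,
\begin{equation*}
\lim_{n\to\infty}\frac{\int_0^{\tau_n} f(X_t)\dx t}{\int_0^{\tau_n}\dx t}
= \frac{\expect^\mathfrak{p}\int_0^{\tau_1} f(X_t)\dx t}{\expect^\mathfrak{p}\tau_1}
= \frac{\mathfrak{p} g_D f}{\mathfrak{p} g_D\one}.
\end{equation*}
Since $\tau_n\to\infty$ a.s.\ by Lemma~\ref{l.il}, a standard sandwiching argument upgrades the ratio limit along the random times $\tau_n$ to the continuous-time Cesàro average $\frac1T\int_0^T f(X_t)\dx t$, which by definition of the stationary density converges to $\int_D f\dx\kappa$. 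Identifying the two limits yields $\kappa = c\,\mathfrak{p} g_D$ with $c = 1/\mathfrak{p} g_D\one$.

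The main obstacle is the ergodicity input: unlike in Theorem~\ref{t.sm}, the excursions are no longer independent, only Markov-modulated through $(R_n)$, so I must justify that the sequence $\xi_n$ is genuinely stationary and ergodic under $\Pb^\mathfrak{p}$. Stationarity follows from starting the chain in $\mathfrak{p}$ and the strong Markov property at the times $\tau_n$ (via the shift relation \eqref{eq.shifted}); ergodicity should follow from the uniqueness of the stationary measure established through the Dobrushin/minorization condition in Lemma~\ref{l.discretemc}, which gives a spectral gap and hence ergodicity of the chain and of functionals driven by it. A secondary technical point is the passage from the ratio limit along $\tau_n$ to the true time average and the identification of that time average with $\kappa$; this requires knowing the convergence holds not merely $\Pb^\mathfrak{p}$-a.s.\ but in a way that pins down $\kappa$ independently of the (stationary) starting measure, which is exactly where the \emph{uniqueness} of $\kappa$ from \cite[Theorem 6.2]{Bogdan2023} is used to close the argument.
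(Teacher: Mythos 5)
Your proposal is correct in outline, but it takes a genuinely different route from the paper, which deliberately avoids ergodic theory here (the paper introduces Theorem~\ref{t.invariantmeasure2} explicitly as ``a different proof'' from Theorem~\ref{t.sm}). The paper's argument is a direct verification of invariance: by Corollary~\ref{c.killedprocess}, \(\langle h, \mathfrak{p} g_D\rangle = \expect^{\mathfrak{p}}\int_0^\tau h(X_s)\dx s\); applying this with \(h = K(t)f\) and using the Markov property gives \(\langle K(t)f, \mathfrak{p} g_D\rangle = \expect^{\mathfrak{p}}\int_t^{\tau+t} f(X_s)\dx s\), and since \(X_\tau\) has law \(\mathfrak{p} h_D\mu = \mathfrak{p}\), the processes \((X_{\tau+s})_{s\geq 0}\) and \((X_s)_{s\geq 0}\) have the same law under \(\Pb^{\mathfrak{p}}\) (same initial law, same semigroup \(K\) by the strong Markov property), so \(\expect^{\mathfrak{p}}\int_\tau^{\tau+t} f(X_s)\dx s = \expect^{\mathfrak{p}}\int_0^t f(X_s)\dx s\) and the boundary terms cancel, yielding \(\langle K(t)f, \mathfrak{p} g_D\rangle = \langle f, \mathfrak{p} g_D\rangle\); uniqueness of \(\kappa\) from \cite[Theorem 6.2]{Bogdan2023} then finishes in one line. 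Your route instead upgrades the renewal/ergodic argument of Theorem~\ref{t.sm} from i.i.d.\ excursions to stationary ergodic ones, which is viable and even buys something extra, namely a pathwise ratio limit theorem under \(\Pb^{\mathfrak{p}}\); but it carries a real technical burden at exactly the point you flagged. Be careful that ergodicity of \((\xi_n)\) does \emph{not} follow from ergodicity of \((R_n)\) plus the phrase ``functionals driven by it'': \(\xi_n\) involves randomness not measurable with respect to \(\sigma(R_k : k\geq 0)\) (the excursion given its endpoints), so one must pass to the enlarged chain \(Z_n \coloneqq (X^{(n)}, R_{n+1})\), whose transition kernel depends only on the \(R\)-coordinate; any invariant law of \(Z_n\) projects to an invariant law of \(h_D\mu\), hence is unique by the Dobrushin/minorization condition of Lemma~\ref{l.discretemc}, and uniqueness of the invariant law makes the stationary enlarged chain ergodic, after which Birkhoff applies to your additive functionals. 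You should also record the integrability input \(\expect^{\mathfrak{p}}\tau_1 = \mathfrak{p} g_D\one < \infty\) (clear, since \(g_D\one\) is bounded on the bounded set \(D\)), and your final identification step is right: the a.s.\ time-average limit defines an invariant probability measure, and uniqueness of \(\kappa\) pins it down. With these points supplied your proof is a valid alternative; the paper's computation is shorter, needs only the strong Markov property at \(\tau\), and sidesteps the stationarity/ergodicity bookkeeping entirely.
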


\begin{proof}
   It follows from Corollary \ref{c.killedprocess} that for $h\in B_b(D)$, 
    \[
    \int_D h(y)[\mathfrak{p} g_D](y)\dx y = \expect^\mathfrak{p} \int_0^{\tau} h(X_s)\dx s.
    \]
    Applying this to 
    \[
    h(y) = (K(t)f)(y) = \expect^y f(X_t)
    \]
    for $f\in B_b(D)$, we get
    \begin{align*}
        \langle K(t)f, \mathfrak{p} g_D\rangle & = \expect^\mathfrak{p} \int_0^{\tau} \expect^{X_s} f(X_t)\dx s = \expect^\mathfrak{p} \int_0^\tau f(X_{t+s})\dx s\\
        & = \expect^\mathfrak{p} \int_t^{\tau+t} f(X_s)\dx s\\
        & = \expect^\mathfrak{p} \int_0^\tau f(X_s)\dx s + \expect^\mathfrak{p}\int_{\tau}^{\tau+t}f(X_s)\dx s - \expect^\mathfrak{p}\int_0^t f(X_s)\dx s\\
        & = \expect^\mathfrak{p}\int_0^\tau f(X_s)\dx s = \langle f, \mathfrak{p} g_D\rangle.
    \end{align*}
    Here, the second equality follows from the Markov property and the fifth equality from the fact that, by the choice of $\mathfrak{p}$ as the starting distribution, 
    the processes $(X_s)_{s\geq 0}$ and $(X_{\tau+s})_{s\geq 0}$ have the same initial distribution and the same transition probabilities, whence
    \[
    \expect^\mathfrak{p}\int_{\tau}^{\tau+t}f(X_s)\dx s = \expect^\mathfrak{p}\int_0^t f(X_s)\dx s    
    \]
Of course,   we take  $c=1/\int_D\int_D g_D(x,y)\mathfrak{p}(\!\dx x) \dx y$, to normalize $\mathfrak{p} g_D$.
\end{proof}

\end{document}